\renewcommand{\@seccntformat}[1]{{\csname the#1\endcsname}{\normalsize.}\hspace{.5em}}
\numberwithin{equation}{section}
\def \[{\begin{equation}}
\def \]{\end{equation}}
\newtheorem{thm}{Theorem}[section]
\newtheorem{lem}[thm]{Lemma}
\newtheorem{con}[thm]{Conjecture}
\newtheorem{rem}[thm]{Remark}
\newenvironment{kst}
{\setlength{\leftmargini}{1.3\parindent}
 \begin{itemize}
 \setlength{\itemsep}{-1.1mm}}
{\end{itemize}}
\begin{document}
\setlength{\baselineskip}{16.5pt}

\begin{center}{\Large \bf A stability result for Berge-$K_{3,t}$ $r$-graphs and its applications}

\vspace{4mm} {\large Junpeng Zhou$^{\rm{a,b}}$, Xiying Yuan$^{\rm{a,b}*}$, Wen-Huan Wang$^{\rm{a,b}}$} \vspace{2mm}

{\small $^{\rm{a}}$ Department of Mathematics, Shanghai University, Shanghai 200444, P.R. China}

{\small $^{\rm{b}}$ Newtouch Center for Mathematics of Shanghai University, Shanghai 200444, P.R. China}

\end{center}

\footnotetext{*Corresponding author. Email address: xiyingyuan@shu.edu.cn (Xiying Yuan)}

\footnotetext{junpengzhou@shu.edu.cn (Junpeng Zhou)}

\footnotetext{whwang@shu.edu.cn (Wen-Huan Wang)}

\footnotetext{This work is supported by the National Natural Science Foundation of China (Nos.11871040, 12271337, 12371347)}

\begin{abstract}
An $r$-uniform hypergraph ($r$-graph) is linear if any two edges intersect at most one vertex. For a graph $F$, a hypergraph $H$ is Berge-$F$ if there is a bijection $\phi:E(F)\rightarrow E(H)$ such that $e\subseteq \phi(e)$ for all $e$ in $E(F)$.
In this paper, a kind of stability result for Berge-$K_{3,t}$ linear $r$-graphs is established. Based on this stability result, an upper bound for the linear Tur\'{a}n number of Berge-$K_{3,t}$ is determined.
For an $r$-graph $H$, let $\mathcal{A}(H)$ be the adjacency tensor of $H$. The spectral radius of $H$ is the spectral radius of the tensor $\mathcal{A}(H)$.
Some bounds for the maximum spectral radius of connected Berge-$K_{3,t}$-free linear $r$-graphs are obtained. 
\end{abstract}

{\noindent{\bf Keywords}: Berge hypergraph, Tur\'{a}n number, spectral extremal problem, adjacency tensor}

\section{\normalsize Introduction}
\ \ \ \
A hypergraph $H=(V(H),E(H))$ consists of a vertex set $V(H)$ and an edge set $E(H)$, where each entry in $E(H)$ is a nonempty subset of $V(H)$. If $|e|=r$ for any $e\in E(H)$, then $H$ is called an $r$-graph ($r$-uniform hypergraph). A hypergraph $H$ is linear if any two edges intersect at most one vertex. For $v\in V(H)$, let $E_v(H)=\{e\in E(H)\ |\ v\in e\}$. The degree $d_v(H)$ of vertex $v$ (or simply $d_v$) is defined as $|E_v(H)|$. 
The neighborhood $N_v(H)$ of vertex $v$ is the set of all vertices adjacent to $v$ in $H$.

Let $F=(V(F),E(F))$ be a simple graph. A hypergraph $H$ is called Berge-$F$ if there is a bijection $\phi: E(F)\rightarrow E(H)$ such that $e\subseteq \phi(e)$ for all $e$ in $E(F)$. The graph $F$ is called a skeleton of Berge-$F$ hypergraphs \cite{D4}. 
Let $\mathcal{F}$ be a given family of hypergraphs. A hypergraph $H$ is called $\mathcal{F}$-free if $H$ does not contain any hypergraphs in $\mathcal{F}$. The Tur\'{a}n number ${\rm{ex}}_r(n,\mathcal{F})$ (linear Tur\'{a}n number ${\rm{ex}}^{\rm{lin}}_r(n,\mathcal{F})$) of $\mathcal{F}$ is the maximum number of edges of all $\mathcal{F}$-free $r$-graph ($\mathcal{F}$-free linear $r$-graph) with $n$ vertices.

Tur\'{a}n problems in graphs and hypergraphs are the central topics of extremal combinatorics \cite{B1}. In 1907, Mantel \cite{A222} proved Mantel theorem that ${\rm{ex}}_2(n,K_{3})\leq \lfloor\frac{n^2}{4}\rfloor$. In \cite{A333}, Tur\'{a}n determined ${\rm{ex}}_2(n,K_{t+1})$ in 1941.
For Berge hypergraphs, Gy\H{o}ri and Lemons \cite{B2} studied Tur\'{a}n numbers of Berge-$C_{2k}$ and Berge-$C_{2k+1}$. Gerbner and Palmer \cite{B3} gave some bounds for the Tur\'{a}n number of Berge-$K_{s,t}$ for $r$-graphs, where $r\geq s+t$ and $K_{s,t}$ is a complete bipartite graph with two parts of order $s$ and $t$. Gerbner et al. \cite{B5} determined the asymptotics for the Tur\'{a}n number of Berge-$K_{2,t}$ for $3$-graphs. They also proved an upper bound for the linear Tur\'{a}n number of Berge-$K_{2,t}$.
Kang et al. \cite{B7} studied the Tur\'{a}n number of Berge-$F$ when $F$ is a matching of size $k+1$.
Khormali and Palmer \cite{D4} proved an asymptotic result for the Tur\'{a}n number of Berge star forests.
Ghosh et al. \cite{C333} proved an upper bound for the Tur\'{a}n number of Berge-$k$-book for $3$-graphs.
For more results on the (linear) Tur\'{a}n problems of Berge hypergraphs, one can refer to \cite{B2,B3,B4,A666,B5,C5,C4,B7,D4,C111,C222,C333,C444,E333}.

The purpose of this paper is to study the linear Tur\'{a}n problem of Berge-$K_{3,t}$. A kind of stability result for Berge-$K_{3,t}$ linear $r$-graphs is established. Based on this stability result, we determine an upper bound for the linear Tur\'{a}n number of Berge-$K_{3,t}$.

\begin{thm} Suppose that $n\geq r\geq3$ and $t\geq3$. Then
\begin{eqnarray*}
{\rm{ex}}_r^{\rm{lin}}(n,{\text{{\rm{Berge}}-}}K_{3,t})\leq \frac{(r(r-3)+4)n^2}{2r(r-1)^2}+O(n^\frac{3}{2}).
\end{eqnarray*}
\end{thm}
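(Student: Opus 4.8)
The plan is to bound $m:=|E(H)|$ directly by a double count, where $H$ is a Berge-$K_{3,t}$-free linear $r$-graph on $n$ vertices; one could also route this through the stability description announced above, but a self-contained count already yields a bound of the required order. Call a vertex $w\notin\{x,y,z\}$ a \emph{Berge-common-neighbour} of an unordered triple $T=\{x,y,z\}\subseteq V(H)$ if there are three pairwise distinct edges $e_x,e_y,e_z\in E(H)$ with $\{x,w\}\subseteq e_x$, $\{y,w\}\subseteq e_y$ and $\{z,w\}\subseteq e_z$, and let $\mathrm{bcn}(T)$ be their number. I will sandwich $\Sigma:=\sum_T\mathrm{bcn}(T)$. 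For the lower bound, fix $w$: since $H$ is linear the $d_w$ edges through $w$ pairwise meet only at $w$, so picking three of them and one further vertex in each yields a triple having $w$ as a Berge-common-neighbour, and distinct choices yield distinct triples (again by linearity); hence $w$ contributes to exactly $\binom{d_w}{3}(r-1)^3$ triples, so $\Sigma=(r-1)^3\sum_w\binom{d_w}{3}\ge n(r-1)^3\binom{rm/n}{3}$ by $\sum_w d_w=rm$ and convexity of $x\mapsto\binom{x}{3}$.

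The upper bound is the crux: I claim Berge-$K_{3,t}$-freeness forces $\mathrm{bcn}(T)<t(3r-5)$ for every $T$. Given Berge-common-neighbours $w_1,\dots,w_k$ of $T=\{x,y,z\}$ with witness triples $(e_i^x,e_i^y,e_i^z)$, the only obstruction to reading off a Berge-$K_{3,t}$ on parts $\{x,y,z\}$ and $t$ of the $w_i$ is that the $3t$ chosen edges must be pairwise distinct — exactly the condition imposed by the bijection in the definition of ``Berge-$F$''. Form the conflict graph $G$ on $\{w_1,\dots,w_k\}$ by joining $w_i\sim w_j$ whenever their witness triples share an edge. Linearity forces $e_i^x$ to be a witness of another $w_j$ only as the unique edge through $w_j$ and $x$ (it cannot contain $y$ or $z$, else $w_i$'s witnesses would collide), whence $w_j\in e_i^x\setminus\{w_i,x\}$; thus $\deg_G(w_i)\le 3(r-2)$, so $\chi(G)\le 3r-5$ and $G$ has an independent set of size $\ge k/(3r-5)$. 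An independent set in $G$ consists of $w_i$ with pairwise disjoint witness triples, i.e.\ with all $3\cdot(\text{its size})$ witnessing edges distinct, so once $k\ge t(3r-5)$ a genuine Berge-$K_{3,t}$ appears, proving the claim. I expect this conflict-graph step — the bookkeeping that upgrades a ``Berge'' configuration to one with honestly distinct edges — to be the main obstacle; everything else is routine.

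Combining the two estimates gives $n(r-1)^3\binom{rm/n}{3}\le\Sigma<t(3r-5)\binom{n}{3}$, and solving for $m$ bounds it by a quantity of order $n^{5/3}$, a fortiori by the right-hand side in the statement. Pinning down the exact leading constant $\frac{r(r-3)+4}{2r(r-1)^2}$ together with the $O(n^{3/2})$ error term is what the announced stability analysis is for: one replaces the blunt convexity step above by the structural description of a near-extremal $H$ — essentially, how many vertices may carry large degree and what the remainder must look like — and feeds it into a count of this type before optimising. That last optimisation, rather than the combinatorics above, is where I would expect the remaining effort to go.
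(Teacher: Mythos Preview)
Your argument is correct and in fact proves more than the stated theorem, by a route independent of the paper's. The paper derives Theorem~1.1 from its stability Theorem~2.1 via Lemmas~3.2 and~3.3: it bounds $\sum_{v\in N_u(H)}d_v$ for every adjacent pair $u,w$, sums over $u$, and applies Cauchy--Schwarz to obtain a quadratic inequality for the average degree $d(H)$, whose solution gives the claimed leading constant. Your K\H{o}v\'ari--S\'os--Tur\'an-style double count with the conflict-graph step (bounding the conflict-graph maximum degree by $3(r-2)$ and extracting an independent set of size $\ge k/(3r-5)$) instead yields $m=O_{r,t}(n^{5/3})$, which is asymptotically \emph{stronger} --- indeed the paper's leading constant $\tfrac{r(r-3)+4}{2r(r-1)^2}$ already equals the trivial linear-hypergraph bound $\tfrac{1}{r(r-1)}$ at $r=3$ and exceeds it for $r\ge 4$. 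Consequently your final paragraph is off: since $n^{5/3}=o(n^2)$, your bound implies the statement outright, and no further stability analysis or ``optimisation'' is needed to recover the constant. What the paper's approach genuinely buys is not a sharper edge count but the local inequality of Lemma~3.3, which it reuses verbatim for the spectral application (Theorem~1.2) via Lemma~4.1; a global $O(n^{5/3})$ edge bound alone would not plug into that argument.
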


A tensor $\mathcal{T}$ with order $r$ and dimension $n$ over the complex field $\mathbb{C}$ is a multi-array $\mathcal{T}=(\mathcal{T}_{i_1i_2\cdots i_r})$, where $i_j\in [n]:=\{1,2,\cdots,n\}$ and $j\in [r]$. A tensor $\mathcal{T}$ is called symmetric if $\mathcal{T}_{i_1i_2\cdots i_r}=\mathcal{T}_{\sigma({i_1})\cdots\sigma({i_r})}$, where $\sigma$ is any permutation of its indices. For a vector ${\bf{x}}=(x_1,x_2,\cdots,x_n)^{\rm{T}}\in \mathbb{C}^n$, by the definition of product of tensors \cite{A1}, vector $\mathcal{T}{\bf{x}}^{r-1}$ is as follows
\begin{eqnarray}
(\mathcal{T}{\bf{x}}^{r-1})_i=\sum_{i_2,\cdots,i_r\in [n]}\mathcal{T}_{ii_2\cdots i_r}x_{i_2}\cdots x_{i_r}, \ i\in [n].
\end{eqnarray}
If there exists a number $\lambda \in \mathbb{C}$ and a nonzero vector ${\bf{x}}\in \mathbb{C}^n$ satisfying $$\mathcal{T}{\bf{x}}^{r-1}=\lambda {\bf{x}}^{[r-1]},$$ where ${\bf{x}}^{[r-1]}:=(x_1^{r-1},x_2^{r-1},\cdots,x_n^{r-1})^{\rm{T}}$, then $\lambda$ is called an eigenvalue of $\mathcal{T}$ and ${\bf{x}}$ is an eigenvector corresponding to $\lambda$ \cite{A4}.
The spectral radius of $\mathcal{T}$ is defined as $$\rho(\mathcal{T})=\max\{|\lambda|\,\big|\, \lambda \ \text{is an eigenvalue of} \ \mathcal{T}\}.$$

In 2012, Cooper and Dutle \cite{A4} gave the definition of adjacency tensor $\mathcal{A}(H)$ for an $r$-graph $H$. The adjacency tensor $\mathcal{A}(H)=(\mathcal{A}_{i_1i_2\cdots i_r})$ of an $r$-graph $H$ is an order $r$ dimension $n$ symmetric tensor, where
\begin{eqnarray*}
\mathcal{A}_{i_1i_2\cdots i_r}=
\begin{cases}
\frac{1}{(r-1)!}, & \text{if} \ \{i_1,i_2,\cdots,i_r\}\in E(H); \\
0, & \text{otherwise}.
\end{cases}
\end{eqnarray*}

Note that if an $r$-graph $H$ is connected, then $\mathcal{A}(H)$ is weakly irreducible \cite{A5}. By the Perron-Frobenius theorem for nonnegative tensors \cite{A3,A5,A6,A7}, we know that $\rho(\mathcal{A}(H))$ is an eigenvalue of $\mathcal{A}(H)$, and there is a unique positive eigenvector ${\bf{x}}=(x_1,x_2,\cdots,x_n)^{\rm{T}}$ with $\|{\bf{x}}\|_r=1$ corresponding to $\rho(\mathcal{A}(H))$. We call ${\bf{x}}$ the Perron vector of $H$. Throughout this paper, the spectral radius of an $r$-graph $H$, denoted by $\rho(H)$, is defined as the spectral radius of the adjacency tensor $\mathcal{A}(H)$.

The spectral hypergraph theory has become a hot topic in algebraic graph theory \cite{A4,E111,E222}.
The (linear) spectral Tur\'{a}n problem is a spectral version of the graph or hypergraph Tur\'{a}n problem, which aims to determine the maximum spectral radius of all $\mathcal{F}$-free graphs or (linear) hypergraphs with $n$ vertices.
Recently, the spectral Tur\'{a}n problem of hypergraphs has attracted much attention. In \cite{C4}, Hou et al. determined the maximum spectral radius of Fan$^r$-free linear $r$-graphs and gave an upper bound for the spectral radius of Berge-$C_4$-free linear $r$-graphs. Gao et al. \cite{C5} determined the maximum spectral radius of linear $r$-graphs without $r$-expansion of $K_{r+1}$ and characterized the extremal linear $r$-graph. Wang and Yu \cite{E1} gave an upper bound for the $\alpha$-spectral radius of Berge-$C_l$-free linear $3$-graphs.

Let ${\rm{spex}}_r^{\rm{lin}}(n,{\rm{\text{Berge-}}}K_{3,t})$ denote the maximum spectral radius of all connected Berge-$K_{3,t}$-free linear $r$-graphs with $n$ vertices.
For $n\geq r\geq2$, $t\geq3$ and $x,y>0$, set
\begin{eqnarray*}
f(n,r,t,x,y)&:=& \bigg(t(r-1)+\frac{(r-2)(n-r)}{2(r-1)}+\frac{n-r}{(r-1)^2}\bigg)x+\frac{1}{r-1}y \notag\\
&{}&-\bigg(t+r-2+\frac{(r-2)(n-r-2)}{2(r-1)}-\frac{(r-2)(r-3)(n-r)}{2(r-1)^2}\bigg).
\end{eqnarray*}

Based on this stability result that has been established, we also obtain an upper bound and a lower bound for the maximum spectral radius of connected Berge-$K_{3,t}$-free linear $r$-graphs.

\begin{thm} Suppose that $n\geq r\geq3$ and $t\geq3$. Then
\begin{eqnarray*}
{\rm{spex}}_r^{\rm{lin}}(n,{\text{{\rm{Berge}}-}}K_{3,t})\leq f^{\frac{1}{2}}\bigg(n,r,t,\frac{n-1}{r-1},\frac{n-1}{r-1}\bigg).
\end{eqnarray*}
\end{thm}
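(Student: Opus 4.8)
The plan is to combine the Perron--Frobenius eigenvalue equation for the adjacency tensor with the upper bound on the number of edges that the stability result (and Theorem~1.1) provides. Let $H$ be a connected Berge-$K_{3,t}$-free linear $r$-graph on $n$ vertices achieving the maximum spectral radius $\rho:=\rho(H)$, and let ${\bf x}=(x_1,\dots,x_n)^{\rm T}$ be its Perron vector, normalized so that $x_u=\max_i x_i=1$ for some vertex $u$. Writing out the eigenvalue equation at $u$ gives
\begin{eqnarray*}
\rho\, x_u^{r-1}=\rho=(\mathcal{A}(H){\bf x}^{r-1})_u=\sum_{e\in E_u(H)}\ \prod_{v\in e\setminus\{u\}} x_v .
\end{eqnarray*}
The first step is to bootstrap this identity twice: apply it once to see that $\rho\le d_u\le $ (max degree), and then, crucially, apply the eigenvalue equation a second time to each neighbor $v$ of $u$ to rewrite $\sum_{v\in N_u(H)} x_v^{?}$-type quantities in terms of $\rho$ and counts of edges within distance two of $u$. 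This is the standard ``two-step'' spectral argument, and it reduces everything to controlling (i) the maximum degree $d_u$, (ii) the number of edges meeting $u$, and (iii) the number of edges meeting $N_u(H)\cup\{u\}$ — i.e. a local edge count.

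The second step is to bound these local quantities using the Berge-$K_{3,t}$-free hypothesis. Since $H$ is linear and Berge-$K_{3,t}$-free, any three vertices can have only boundedly many ``common-ish'' connections; more precisely, one should extract from the stability result the fact that the number of edges incident to any fixed small set of vertices is at most roughly $t(r-1)+\tfrac{(r-2)(n-r)}{2(r-1)}+\tfrac{n-r}{(r-1)^2}$ — which is exactly the coefficient of $x$ in $f$ — while the ``second-neighborhood'' correction contributes the constant term of $f$ with its negative sign. Substituting the parameter values $x=y=\tfrac{n-1}{r-1}$ corresponds to the extremal (regular-like) case where every $x_v$ is essentially equal and each vertex has degree about $\tfrac{n-1}{r-1}$; the function $f$ is designed so that $\rho^2\le f(n,r,t,\tfrac{n-1}{r-1},\tfrac{n-1}{r-1})$ after collecting terms. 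One then takes square roots.

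Concretely I would proceed as follows. (a) Fix $u$ with $x_u=1$ and expand $\rho^2=\rho\cdot(\mathcal{A}{\bf x}^{r-1})_u$, distributing $\rho$ inside the sum and applying the eigenvalue equation at each vertex appearing; this yields $\rho^2$ as a sum over pairs of edges sharing at least the vertex $u$ (or chained through a neighbor of $u$), weighted by products of $x$-values, all of which are $\le 1$. (b) Count these edge-pairs: linearity forces distinct neighbors along distinct edges, and Berge-$K_{3,t}$-freeness caps how many edges can revisit a pair of vertices, giving the degree bound $d_u\le t(r-1)+\tfrac{(r-2)(n-r)}{2(r-1)}+\tfrac{n-r}{(r-1)^2}$ and the analogous bound on edges incident to $N_u(H)$. (c) Feed these counts into the expansion from (a); after simplification the right-hand side is exactly $f(n,r,t,\tfrac{n-1}{r-1},\tfrac{n-1}{r-1})$, and the theorem follows by monotonicity of $\sqrt{\cdot}$.

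The main obstacle I anticipate is step (b) — pinning down the exact local edge counts with the precise constants that make $f$ come out right. The coefficient $\tfrac{(r-2)(n-r)}{2(r-1)}+\tfrac{n-r}{(r-1)^2}$ and especially the subtracted term $\tfrac{(r-2)(r-3)(n-r)}{2(r-1)^2}$ are delicate: they presumably come from carefully distinguishing edges through $u$, edges meeting $N_u(H)$ but not $u$, and a double-counting correction for edges counted from two neighbors simultaneously, all under the constraint that a Berge-$K_{3,t}$ cannot be formed. Getting these book-keeping terms to match $f$ exactly (rather than up to lower-order error) is where the real work lies; the spectral manipulation in (a) and (c) is routine once (b) is in hand.
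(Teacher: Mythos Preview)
Your outline is essentially the paper's proof. Step (a) is precisely Lemma~4.1 (cited from Hou--Chang--Cooper): for $u$ the vertex of maximum Perron entry one gets the clean inequality $\rho^2 \le \frac{1}{r-1}\sum_{v\in N_u(H)} d_v$, with no edge-pair bookkeeping needed beyond AM--GM and $x_v\le 1$. Step (b) is the paper's Lemma~3.3, which shows $\sum_{v\in N_u(H)\setminus\{w\}} d_v \le (r-1)f(n,r,t,d_u,d_w)-d_w$ for any neighbor $w$ of $u$; adding $d_w$ back and dividing by $r-1$ gives $\rho^2\le f(n,r,t,d_u,d_w)$. Step (c) is then the one-line substitution $d_u,d_w\le \frac{n-1}{r-1}$ together with monotonicity of $f$ in its last two arguments.

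There is, however, a real confusion in your description of (b) that would derail you if pursued literally. You write that Berge-$K_{3,t}$-freeness ``giv[es] the degree bound $d_u\le t(r-1)+\tfrac{(r-2)(n-r)}{2(r-1)}+\tfrac{n-r}{(r-1)^2}$''. That expression is not a bound on $d_u$; it is the \emph{coefficient of $d_u$} in the upper bound for $\frac{1}{r-1}\sum_{v\in N_u} d_v$. The only degree bound invoked anywhere is the trivial $d_u,d_w\le \frac{n-1}{r-1}$ from linearity --- which is exactly why $\frac{n-1}{r-1}$ is plugged in as the last two arguments of $f$. All the delicate constants you flag live inside the bound on $\sum_{v\in N_u} d_v$ (Lemma~3.3), whose proof partitions $N_u(H)$ into $N_1(u,w)$, $N_2(u,w)$, and $l_{uw}\setminus\{u,w\}$, applies Lemma~3.2 (three separate invocations of the stability Theorem~2.1) to cap $\sum_i e_v^i(N_{uw})$ on each piece, and then does an inclusion--exclusion count on the ``leftover'' sets $L_v$. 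So your instinct that the combinatorics is where the work is was right --- just aim it at $\sum_{v\in N_u} d_v$, not at $d_u$.
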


\begin{thm} Suppose that $t>r\geq2$ and $(r-1)^r\,\big|\,n-r$. Then
\begin{eqnarray*}
{\rm{spex}}_r^{\rm{lin}}(n,{\text{{\rm{Berge}}-}}K_{3,t})\geq 2^{-\frac{2}{r}}\Big(\sqrt{1+\frac{(n-r)2^{r+1}}{r-1}}+1\Big)^{\frac{2}{r}}.
\end{eqnarray*}
\end{thm}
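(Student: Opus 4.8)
The plan is to construct, for each admissible $n$, an explicit connected Berge-$K_{3,t}$-free linear $r$-graph $H=H_n$ and to evaluate $\rho(H_n)$ exactly; the inequality is then immediate from ${\rm{spex}}_r^{\rm{lin}}(n,\text{{\rm{Berge}}-}K_{3,t})\ge\rho(H_n)$. Set $m=(n-r)/(r-1)$; the hypothesis $(r-1)^{r}\mid n-r$ makes $m$ a positive integer with $m\ge r-1$. I would fix a simple $(r-1)$-regular bipartite graph $G^{\ast}$ with parts $X=\{x_1,\dots,x_m\}$ and $Y=\{y_1,\dots,y_m\}$ (such a graph exists precisely because $m\ge r-1$); it has exactly $m(r-1)=n-r$ edges. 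Let $V(H)=\{a,b\}\cup\{c_1,\dots,c_{r-2}\}\cup L$, where $L$ is a set of $n-r$ ``leaves'' identified with $E(G^{\ast})$, and let the edges of $H$ be: one ``spine'' edge $e_0=\{a,b,c_1,\dots,c_{r-2}\}$; for each $i$, the $r$-set consisting of $a$ together with the $r-1$ leaves incident to $x_i$ in $G^{\ast}$; and for each $j$, the $r$-set consisting of $b$ together with the $r-1$ leaves incident to $y_j$.

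Next I would record that $H$ is a connected linear $r$-graph. Any two $a$-edges (resp.\ $b$-edges) meet only in $a$ (resp.\ $b$), since $G^{\ast}$ is simple; an $a$-edge and a $b$-edge meet in the leaves simultaneously incident to the relevant $x_i$ and $y_j$, i.e.\ in at most one leaf, again because $G^{\ast}$ is simple; and $e_0$ meets each $a$-edge in $\{a\}$ and each $b$-edge in $\{b\}$. Connectedness holds because $a$ shares an edge with every leaf. The key structural point is that $H$ is Berge-$K_{3,t}$-free: every $c_l$ has degree $1$ and every leaf has degree $2$, so $a$ and $b$ are the only vertices of $H$ of degree at least $3$. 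If $H$ contained a Berge-$K_{3,t}$, then each of the $t$ vertices on its larger side would lie in three distinct edges of the copy, hence have degree at least $3$ in $H$; but $t>r\ge2$ forces $t\ge3$, so this would require at least three vertices of degree $\ge3$, which $H$ does not possess.

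Finally I would compute $\rho(H)$ by producing its Perron vector explicitly. Let $\rho>1$ be the unique positive root of $\rho^{r}-\rho^{r/2}=(n-r)2^{\,r-1}/(r-1)$ (uniqueness is clear, as $\rho\mapsto\rho^{r/2}(\rho^{r/2}-1)$ increases strictly on $(1,\infty)$), and define $\mathbf{x}$ by $x_a=x_b=1$, $x_{c_l}=\rho^{-1/2}$ for every $l$, and $x_{\ell}=2\rho^{-1}$ for every $\ell\in L$. One checks the eigenequation $(\mathcal{A}(H)\mathbf{x}^{r-1})_v=\rho\,x_v^{r-1}$ vertex by vertex: at a leaf both sides equal $2^{\,r-1}\rho^{-(r-2)}$; at a $c_l$ both sides equal $\rho^{-(r-3)/2}$; and at $a$ (and symmetrically at $b$) the left-hand side is $\rho^{-(r-2)/2}+m\,2^{\,r-1}\rho^{-(r-1)}$, which equals $\rho=\rho\,x_a^{r-1}$ exactly because $\rho^{r}-\rho^{r/2}=m\,2^{\,r-1}=(n-r)2^{\,r-1}/(r-1)$. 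Thus $\mathbf{x}$ is a positive eigenvector of $\mathcal{A}(H)$ with eigenvalue $\rho$, and since $H$ is connected the Perron--Frobenius statement quoted in the introduction forces $\rho=\rho(H)$. Writing $\mu=\rho^{r/2}$, the relation $\mu^{2}-\mu=(n-r)2^{\,r-1}/(r-1)$ has positive root $\mu=\tfrac12\bigl(1+\sqrt{1+(n-r)2^{\,r+1}/(r-1)}\bigr)$, so $\rho(H)=\mu^{2/r}=2^{-2/r}\bigl(\sqrt{1+(n-r)2^{\,r+1}/(r-1)}+1\bigr)^{2/r}$, which is the stated bound.

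I expect the genuinely delicate part to be pinning down the construction — in particular the two $(r-1)$-uniform ``fans'' glued at $a$ and at $b$ through a common leaf set — and guaranteeing the auxiliary graph $G^{\ast}$ exists; this is where the divisibility hypothesis is invoked, though for this construction the weaker condition $(r-1)\mid n-r$ together with $n-r\ge(r-1)^{2}$ would already suffice. The spectral part is then the routine bookkeeping above. The case $r=2$ is degenerate: there are no $c$-vertices, $H$ is the book graph $K_2\vee\overline{K_{n-2}}$, and the same verification yields $\rho^{2}-\rho=2(n-2)$, consistent with the formula.
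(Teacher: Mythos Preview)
Your proof is correct and follows essentially the same route as the paper: both construct the same ``double-fan'' hypergraph (a spine edge $\{a,b,c_1,\dots,c_{r-2}\}$ together with $a$-edges and $b$-edges through degree-$2$ leaves), verify Berge-$K_{3,t}$-freeness by the degree count, and solve the identical eigenequation $\rho^{r}-\rho^{r/2}=(n-r)2^{r-1}/(r-1)$. The only cosmetic difference is packaging: the paper takes $G^{\ast}$ to be a disjoint union of $K_{r-1,r-1}$'s (its $[r-1]^2$ blocks), whereas you allow any $(r-1)$-regular simple bipartite graph, which indeed shows the construction needs only $(r-1)\mid n-r$ and $n-r\ge(r-1)^2$ rather than the stated $(r-1)^r\mid n-r$.
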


Finally, we study the structure property of the spectral extremal connected Berge-$K_{3,t}$-free linear $r$-graphs and propose a conjecture about the structure of the spectral extremal connected Berge-$K_{3,t}$-free linear $r$-graphs.

\section{\normalsize A stability result for Berge-$K_{3,t}$ linear $r$-graphs}
\ \ \ \
Let $n\geq r\geq3$ and $H=(V(H),E(H))$ be a linear $r$-graph with $n$ vertices. 
For any two adjacent vertices $u,w\in V(H)$, the edge containing vertices $u$ and $w$ is unique by linearity of $H$, denoted by $l_{uw}$. Note that $l_{uw}=\emptyset$ when $u$ is not adjacent to $w$. For $u_1,\cdots,u_k\in V(H)$, let $N_{u_1\cdots u_k}=N_{u_1}(H)\cap\cdots\cap N_{u_k}(H)$.
In the following we present a stability result for Berge-$K_{3,t}$ linear $r$-graphs.

\begin{thm} Suppose that $n\geq r\geq3$ and $H$ be a linear $r$-graphs with $n$ vertices. Let $u,w\in V(H)$ be any two adjacent vertices and $v\in N_{u}(H)\backslash\{w\}$. Suppose $W\subseteq N_{vuw}\backslash (l_{vu}\cup l_{vw}\cup l_{uw})$ and any two vertices $u_1,u_2$ in $W$ satisfy $l_{vu_1}\neq l_{vu_2}$. If $|W|\geq (t-1)(r-1)+1$, then $H$ contains a Berge-$K_{3,t}$.
\end{thm}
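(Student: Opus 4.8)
The plan is to produce an explicit Berge-$K_{3,t}$ inside $H$ whose skeleton $K_{3,t}$ has one side equal to $\{v,u,w\}$ and the other side equal to a set $B\subseteq W$ with $|B|=t$, and to define the required bijection $\phi$ by sending the edge of $K_{3,t}$ between $a\in\{v,u,w\}$ and $b\in B$ to the (unique, by linearity of $H$) edge $l_{ab}$ through $a$ and $b$; such an edge exists because $B\subseteq N_{vuw}$. The whole proof then reduces to choosing $B$ so that these $3t$ edges $l_{ab}$ are pairwise distinct.

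First I would record the structure forced by $W\cap(l_{vu}\cup l_{vw}\cup l_{uw})=\emptyset$. For $b\in W$ the edge $l_{vb}$ contains $v$ but cannot contain $u$ (otherwise it would equal $l_{vu}$, forcing $b\in l_{vu}$) and cannot contain $w$ (otherwise it would equal $l_{vw}$, forcing $b\in l_{vw}$); symmetrically $l_{ub}$ contains $u$ but neither $v$ nor $w$, and $l_{wb}$ contains $w$ but neither $u$ nor $v$. Hence two edges of different ``type'' — say some $l_{vb}$ and some $l_{ub'}$ — are automatically distinct, whatever $b,b'\in B$ are, so the only possible collisions are among edges of one type; and within the $v$-type there are none, since the hypothesis that $l_{vu_1}\ne l_{vu_2}$ for all $u_1,u_2\in W$ says precisely that $b\mapsto l_{vb}$ is injective on $W$. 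It therefore remains to choose $B\subseteq W$ with $|B|=t$ on which both $b\mapsto l_{ub}$ and $b\mapsto l_{wb}$ are injective.

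For this I would pass to an auxiliary graph. Partition $W$ into classes according to the value of $l_{ub}$; each class lies in one edge through $u$, so has at most $r-1$ vertices, and likewise for the partition by $l_{wb}$. Let $G$ be the graph on $W$ joining $b$ to $b'$ whenever $l_{ub}=l_{ub'}$ or $l_{wb}=l_{wb'}$; then an independent set of $G$ of size $t$ is exactly the $B$ we want. Now $G$ is a union of two cluster graphs (each a disjoint union of cliques), all cliques having size at most $r-1$, so one sees that its independence number equals the size of a maximum matching in the bipartite graph whose two sides are the two families of classes, with a $u$-class joined to a $w$-class when they intersect. Any vertex cover of this bipartite graph picks out some $u$-classes and some $w$-classes whose union is all of $W$ — indeed, for $b\in W$ the pair (class of $b$ for $u$, class of $b$ for $w$) is an edge, so one of its endpoints lies in the cover — and therefore has at least $|W|/(r-1)$ vertices. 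By K\"{o}nig's theorem the maximum matching has size at least $|W|/(r-1)\ge\frac{(t-1)(r-1)+1}{r-1}>t-1$, hence at least $t$; so $G$ has an independent set $B$ with $|B|=t$.

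It then only remains to assemble the copy: the vertices $v,u,w$ and the $t$ vertices of $B$ are pairwise distinct because $W\subseteq N_{vuw}$ and $v\in N_u(H)\backslash\{w\}$; by the second and third paragraphs the $3t$ edges $l_{ab}$ ($a\in\{v,u,w\}$, $b\in B$) are pairwise distinct, each $l_{ab}$ contains $\{a,b\}$, and there are exactly $|E(K_{3,t})|=3t$ of them, so $\phi$ is a bijection onto the edge set of a subhypergraph of $H$ that is Berge-$K_{3,t}$. The only step carrying real content is the bound $\alpha(G)\ge |W|/(r-1)$ (equivalently, the covering argument on the two partitions of $W$); everything else is bookkeeping with the linearity of $H$.
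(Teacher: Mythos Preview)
Your argument is correct, and it is considerably cleaner than the paper's. Both proofs reduce the task to the same combinatorial core --- finding $t$ vertices $b_1,\dots,b_t$ in $W$ on which both maps $b\mapsto l_{ub}$ and $b\mapsto l_{wb}$ are injective --- but the paper proves this by an explicit and rather lengthy greedy selection: it orders the edges through $u$ meeting $W$, separates off those edges $e_i$ admitting a point $y_i\in e_i\cap W$ with $l_{wy_i}\cap W=\{y_i\}$, and then in the remaining edges picks points one by one according to a minimality/maximality rule (conditions (2.2) and (2.3)), with a contradiction argument in Case~2.2 to show the process cannot stall before reaching $t$ points. Your route replaces all of this by a single appeal to K\"{o}nig's theorem on the bipartite ``class graph'' of the two partitions of $W$, which is exactly the right abstraction: an independent set in your $G$ is the same thing as a matching in that bipartite graph, and any vertex cover there must union up to all of $W$, giving cover size (hence matching size, hence $\alpha(G)$) at least $|W|/(r-1)>t-1$. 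The paper's approach is elementary in the sense of avoiding K\"{o}nig, but at the cost of a delicate multi-case construction; yours is short, conceptual, and would make the subsequent applications in Sections~3 and~4 rest on a much lighter foundation.
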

\begin{proof}[{\bf{Proof}}]
Denote by $e_1,\cdots,e_s$ all edges in $E_{u}(H)$ intersecting with $W$. Since $W\subseteq N_{u}(H)$, $W=\bigcup_{i=1}^s(e_i\cap W)$. For any $e\in E_{u}(H)$, since $u\notin W$ and $u\in e$, we have $|e\cap W|\leq r-1$.
The assumption $|W|\geq (t-1)(r-1)+1$ implies that in $E_{u}(H)$ there exist at least $\lceil\frac{(t-1)(r-1)+1}{r-1}\rceil=t$ distinct edges intersecting with $W$. Thus $t\leq s\leq |W|$.

Note that $1\leq|e_i\cap W|\leq r-1$ for any $1\leq i\leq s$. For any $w'\in e_i\cap W$, we have $1\leq|l_{ww'}\cap W|\leq r-1$ for any $1\leq i\leq s$.
Let
$$E'_{u}(H)=\big\{e_i\,\big|\,\text{there\ exists\ some}\ y_i\in e_{i}\cap W\ \text{such\ that}\ l_{wy_i}\cap W=\{y_i\}\big\},$$
and $|E'_{u}(H)|=k$. Without loss of generality, we may set $E'_{u}(H)=\{e_1,\cdots,e_k\}$. Then for $e\in \{e_{k+1},\cdots,e_s\}$, we have $|l_{ww'}\cap W|\geq2$ for any $w'\in e\cap W$.

{\bf{Case 1.}} $k\geq t-1$.

Note that $y_i\in e_i$ for any $1\leq i\leq t-1$. Then $l_{uy_{1}},\cdots,l_{uy_{t-1}}$ are $t-1$ different edges. Since $l_{wy_i}\cap W=\{y_i\}$ for any $1\leq i\leq t-1$, $l_{wy_{1}},\cdots,l_{wy_{t-1}}$ are $t-1$ different edges.
Since $\big|W\backslash (e_1\cup\cdots\cup e_{t-1})\big|\geq|W|-(t-1)(r-1)\geq1$, we may select a vertex $y_{t}\in W\backslash (e_1\cup\cdots\cup e_{t-1})$. Then $l_{uy_{1}},\cdots,l_{uy_{t-1}},l_{uy_{t}}$ and $l_{wy_{1}},\cdots,l_{wy_{t-1}},l_{wy_{t}}$ are $t$ different edges, respectively.

The assumption $l_{vu_1}\neq l_{vu_2}$ for any $u_1,u_2\in W$ implies that $l_{vy_{1}},\cdots,l_{vy_{t-1}},l_{vy_{t}}$ are $t$ different edges. Note that $W\cap (l_{vu}\cup l_{vw}\cup l_{uw})=\emptyset$ by assumption. Then $l_{vy_{1}},\cdots,l_{vy_{t}}$, $l_{uy_{1}},\cdots,l_{uy_{t}}$, $l_{wy_{1}},\cdots,l_{wy_{t}}$ are $3t$ different edges and form a Berge-$K_{3,t}$ in $H$.

{\bf{Case 2.}} $k\leq t-2$.

Let $W'=(e_{k+1}\cup\cdots\cup e_{s})\cap W$. Then $|W'|\geq (t-k-1)(r-1)+1$. Without loss of generality, we may assume that $|e_{k+1}\cap W|\leq\cdots\leq|e_s\cap W|$. In the following, we give a rule for selecting vertices from $e_{k+1},\cdots,e_{s}$ in turn to form a skeleton of Berge-$K_{3,t}$. Since $|e_{k+1}\cap W'|\geq1$, we take one vertex $y_{k+1}\in e_{k+1}\cap W'$ that satisfies
\begin{eqnarray*}
|l_{wy_{k+1}}\cap W'|=\min_{u'\in e_{k+1}\cap W}|l_{wu'}\cap W'|,
\end{eqnarray*}
and for some $k+2\leq q\leq s$,
\begin{eqnarray*}
|l_{wy_{k+1}}\cap [(e^*_{{k+2}}\cup\cdots\cup e^*_{{p}})\cap W]|=\max_{u'\in e_{k+1}\cap W}|l_{wu'}\cap [(e^*_{{k+2}}\cup\cdots\cup e^*_{{p}})\cap W]|
\end{eqnarray*}
for any $k+2\leq p\leq q$, where $\{e^*_{k+2},\cdots,e^*_{s}\}=\{e_{k+2},\cdots,e_{s}\}$ satisfies $|e^*_{k+2}\cap W'|\geq\cdots\geq|e^*_{s}\cap W'|$.

For $k+2\leq i\leq s$, suppose we may take the vertices $y_{k+1},\cdots,y_{k+a}$ from $e_{k+1},\cdots,e_{i-1}$ in turn such that
\begin{eqnarray}
y_{j}\in (l_{uy_j}\cap W')\big\backslash (l_{wy_{k+1}}\cup\cdots\cup l_{wy_{j-1}}),
\end{eqnarray}
\begin{eqnarray}
\big|l_{wy_{j}}\cap W'_{j-1}\big|=\min_{u'\in l_{uy_j}\cap W' \atop u'\notin l_{wy_{k+1}}\cup\cdots\cup l_{wy_{j-1}}}\big|l_{wu'}\cap W'_{j-1}\big|,
\end{eqnarray}
and for some $j+1\leq q\leq s$,
\begin{eqnarray}
|l_{wy_{j}}\cap [(e^*_{{j+1}}\cup\cdots\cup e^*_{{p}})\cap W]|=\max_{u'\in l_{uy_j}\cap W' \atop u'\notin l_{wy_{k+1}}\cup\cdots\cup l_{wy_{j-1}}}|l_{wu'}\cap [(e^*_{{j+1}}\cup\cdots\cup e^*_{{p}})\cap W]|
\end{eqnarray}
for any $j+1\leq p\leq q$ and $k+1\leq j\leq k+a$, where $1\leq a\leq i-k-1$, $W'_{j-1}=W'\big\backslash (l_{uy_{k+1}}\cup\cdots\cup l_{uy_{j-1}})$ for $k+1\leq j\leq k+a$, and $\{e^*_{j+1},\cdots,e^*_{s}\}=\{e_{j+1},\cdots,e_{s}\}$ satisfies $|(e^*_{j+1}\cap W')\backslash (l_{wy_{k+1}}\cup\cdots\cup l_{wy_{j-1}})|\geq\cdots\geq|(e^*_{s}\cap W')\backslash (l_{wy_{k+1}}\cup\cdots\cup l_{wy_{j-1}})|$.

Now, we consider $e_{i}$, where $k+2\leq i\leq s$. If $|e_i\cap W'|\geq a+1$, then we have $(e_i\cap W')\backslash (l_{wy_{k+1}}\cup\cdots\cup l_{wy_{k+a}})\neq\emptyset$ by linearity of $H$. We take one vertex $y_{k+a+1}$ such that
\begin{eqnarray*}
y_{k+a+1}\in (e_i\cap W')\big\backslash (l_{wy_{k+1}}\cup\cdots\cup l_{wy_{k+a}}),
\end{eqnarray*}
\begin{eqnarray}
\big|l_{wy_{k+a+1}}\cap W'_{k+a}\big|=\min_{u'\in e_i\cap W' \atop u'\notin l_{wy_{k+1}}\cup\cdots\cup l_{wy_{k+a}}}\big|l_{wu'}\cap W'_{k+a}\big|,
\end{eqnarray}
and for some $k+a+2\leq q\leq s$,
\begin{small}
\begin{eqnarray}
|l_{wy_{k+a+1}}\cap [(e^*_{{k+a+2}}\cup\cdots\cup e^*_{{p}})\cap W]|=\max_{u'\in e_i\cap W' \atop u'\notin l_{wy_{k+1}}\cup\cdots\cup l_{wy_{k+a}}}|l_{wu'}\cap [(e^*_{{k+a+2}}\cup\cdots\cup e^*_{{p}})\cap W]|
\end{eqnarray}
\end{small}
for any $k+a+2\leq p\leq q$, where $W'_{k+a}=W'\big\backslash (l_{uy_{k+1}}\cup\cdots\cup l_{uy_{k+a}})$ and $\{e^*_{k+a+2},\cdots,e^*_{s}\}=\{e_{k+a+2},\cdots,e_{s}\}$ satisfies $|(e^*_{k+a+2}\cap W')\backslash (l_{wy_{k+1}}\cup\cdots\cup l_{wy_{k+a}})|\geq\cdots\geq|(e^*_{s}\cap W')\backslash (l_{wy_{k+1}}\cup\cdots\cup l_{wy_{k+a}})|$.
Then $l_{uy_{k+1}},\cdots,l_{uy_{k+a+1}}$ and $l_{wy_{k+1}},\cdots,l_{wy_{k+a+1}}$ are $k+a+1$ distinct edges, respectively.

If $|e_i\cap W'|\leq a$ and $(e_i\cap W')\big\backslash (l_{wy_{k+1}}\cup\cdots\cup l_{wy_{k+a}})\neq\emptyset$, then we take one vertex $y_{k+a+1}$ in $(e_i\cap W')\big\backslash (l_{wy_{k+1}}\cup\cdots\cup l_{wy_{k+a}})$ which satisfies (2.4) and (2.5). Otherwise, we have $w'\in l_{wy_{k+1}}\cup\cdots\cup l_{wy_{k+a}}$ for any $w'\in e_i\cap W'$. Then we continue to consider $e_{i+1}$ when $i+1\leq s$.

By the above rule for selecting vertices, suppose we have chosen $b$ vertices $y_{k+1},\cdots,y_{k+b}$ in $e_{k+1},\cdots,e_{s}$, where $k+1\leq k+b\leq s$.

{\bf{Case 2.1.}} $k+b\geq t$.

According to (2.1), $l_{uy_{k+1}},\cdots,l_{uy_{t}}$ and $l_{wy_{k+1}},\cdots,l_{wy_{t}}$ are $t$ different edges, respectively.
Then $l_{uy_{1}},\cdots,l_{uy_{k}},l_{uy_{k+1}},\cdots,l_{uy_{t}}$ and $l_{wy_{1}},\cdots,l_{wy_{k}},l_{wy_{k+1}},\cdots,l_{wy_{t}}$ are $t$ different edges, respectively.  The assumption that $l_{vu_1}\neq l_{vu_2}$ for any $u_1,u_2\in W$, $l_{vy_{1}},\cdots,l_{vy_{t}}$ are $t$ different edges. Thus, $l_{vy_{1}},\cdots,l_{vy_{t}}$, $l_{uy_{1}},\cdots,l_{uy_{t}}$, $l_{wy_{1}},\cdots,l_{wy_{t}}$ are $3t$ different edges and form a Berge-$K_{3,t}$ in $H$ by $W\cap (l_{vu}\cup l_{vw}\cup l_{uw})=\emptyset$.

{\bf{Case 2.2.}} $k+b\leq t-1$.

If we cannot continue to select a vertex $y_{k+b+1}$ satisfying (2.1)-(2.3), then $\sum_{j=k+1}^s|e_{j}\cap W|\leq b(r-1)$. Suppose to the contrary that $\sum_{j=k+1}^s|e_{j}\cap W|>b(r-1)$. Without loss of generality, we may assume that $x_{k+1},\cdots,x_{k+b}\in W'$ such that $l_{vy_{1}},\cdots,l_{vy_{k}}$, $l_{vx_{k+1}},\cdots,l_{vx_{k+b}}$, $l_{uy_{1}},\cdots,l_{uy_{k}}$, $l_{ux_{k+1}},\cdots,l_{ux_{k+b}}$, $l_{wy_{1}},\cdots,l_{wy_{k}}$, $l_{wx_{k+1}},\cdots,l_{wx_{k+b}}$ form a Berge-$K_{3,k+b}$, and there is no vertex $x_{k+b+1}\in W'$ such that $l_{vy_{1}},\cdots,l_{vy_{k}}$, $l_{vx_{k+1}},\cdots,l_{vx_{k+b+1}}$, $l_{uy_{1}},\cdots,l_{uy_{k}}$, $l_{ux_{k+1}},\cdots,l_{ux_{k+b+1}}$, $l_{wy_{1}},\cdots,l_{wy_{k}}$, $l_{wx_{k+1}},\cdots,l_{wx_{k+b+1}}$ form a Berge-$K_{3,k+b+1}$.
Since $\sum_{j=k+1}^s|e_{j}\cap W|>b(r-1)$, we have
\begin{eqnarray*}
\big|\bigcup_{j=k+1}^s(e_{j}\cap W)\big\backslash (l_{wx_{k+1}}\cup\cdots\cup l_{wx_{k+b}})\big|\geq \sum_{j=k+1}^s|e_{j}\cap W|-b(r-1)>0.
\end{eqnarray*}
Note that $k+b\leq t-1<s$. We claim that $\widetilde{w}\notin e\in \{e_{k},\cdots,e_s\}\backslash\{l_{ux_{k+1}},\cdots,l_{ux_{k+b}}\}$ for any $\widetilde{w}\in \bigcup_{j=k+1}^s(e_{j}\cap W)\big\backslash (l_{wx_{k+1}}\cup\cdots\cup l_{wx_{k+b}})$. Otherwise, we may select the vertex $x_{k+b+1}:=\widetilde{w}$ such that $l_{vy_{1}},\cdots,l_{vy_{k}}$, $l_{vx_{k+1}},\cdots,l_{vx_{k+b+1}}$, $l_{uy_{1}},\cdots,l_{uy_{k}}$, $l_{ux_{k+1}},\cdots,l_{ux_{k+b+1}}$, $l_{wy_{1}},\cdots,l_{wy_{k}}$, $l_{wx_{k+1}},\cdots,l_{wx_{k+b+1}}$ form a Berge-$K_{3,k+b+1}$, which is a contradiction.
Thus, $\widetilde{w}\in l_{ux_{i}}$ for any $\widetilde{w}\in \bigcup_{j=k+1}^s(e_{j}\cap W)\big\backslash (l_{wx_{k+1}}\cup\cdots\cup l_{wx_{k+b}})$, where $k+1\leq i\leq k+b$.
By the linearity of $H$, since $1\leq|e_{k+1}\cap W|\leq\cdots\leq|e_s\cap W|\leq r-1$, $\sum_{j=k+1}^s|e_{j}\cap W|>b(r-1)$ and the assumption that there is no vertex $x_{k+b+1}\in W'$ such that a Berge-$K_{3,k+b+1}$ is formed, we may find at least one vertex $\widetilde{w}$ in $\bigcup_{j=k+1}^s(e_{j}\cap W)\big\backslash (l_{wx_{k+1}}\cup\cdots\cup l_{wx_{k+b}})$ satisfying one of the following two cases:
({\rm{a}}) for some $k+1\leq j\leq k+b$, one has $\widetilde{w}\in l_{ux_{j}}$ and $\big|l_{w\widetilde{w}}\cap W'_{j-1}\big|<\big|l_{wx_{j}}\cap W'_{j-1}\big|$;
({\rm{b}}) for some $k+1\leq j\leq k+b$, one has $\widetilde{w}\in l_{ux_{j}}$, $\big|l_{w\widetilde{w}}\cap W'_{j-1}\big|=\big|l_{wx_{j}}\cap W'_{j-1}\big|$ and $|l_{w\widetilde{w}}\cap [(e^*_{{j+1}}\cup\cdots\cup e^*_{{p}})\cap W]|>|l_{wx_{j}}\cap [(e^*_{{j+1}}\cup\cdots\cup e^*_{{p}})\cap W]|$ for some $j+1\leq p\leq q$, where $j+1\leq q\leq s$ and $\{e^*_{j+1},\cdots,e^*_{s}\}=\{e_{j+1},\cdots,e_{s}\}$ satisfies $|(e^*_{j+1}\cap W')\backslash (l_{wx_{k+1}}\cup\cdots\cup l_{wx_{j-1}})|\geq\cdots\geq|(e^*_{s}\cap W')\backslash (l_{wx_{k+1}}\cup\cdots\cup l_{wx_{j-1}})|$. This implies that $x_j$ does not satisfy condition (2.2) or (2.3) for some $k+1\leq j\leq k+b$ (two examples are shown in Remarks 2.2 and 2.3).

Similar to Case 2.1, $l_{vy_{1}},\cdots,l_{vy_{k+b}}$, $l_{uy_{1}},\cdots,l_{uy_{k+b}}$, $l_{wy_{1}},\cdots,l_{wy_{k+b}}$ are $3(k+b)$ different edges and form a Berge-$K_{3,k+b}$ in $H$. The fact that we cannot continue to select a vertex $y_{k+b+1}$ satisfying (2.1)-(2.3) is equivalent to the fact that there is no vertex $y_{k+b+1}\in W'$ such that $l_{vy_{1}},\cdots,l_{vy_{k+b+1}}$, $l_{uy_{1}},\cdots,l_{uy_{k+b+1}}$, $l_{wy_{1}},\cdots,l_{wy_{k+b+1}}$ form a Berge-$K_{3,k+b+1}$.
Therefore, by the above analysis of $x_{k+1},\cdots,x_{k+b}$, we may obtain that there exists some $k+1\leq j\leq k+b$ such that $y_j$ does not satisfy condition (2.2) or (2.3), which is a contradiction.

So we have $\sum_{j=k+1}^s|e_{j}\cap W|\leq b(r-1)$. While
\begin{eqnarray*}
b(r-1)<(t-k-1)(r-1)+1\leq |W'|=\sum_{j=k+1}^s|e_{j}\cap W|,
\end{eqnarray*}
which is a contradiction.

Thus, we may continue to select a vertex $y_{k+b+1}$ that satisfy (2.1)-(2.3) when $k+b\leq t-1$. That is, we may take at least $t-k$ vertices $y_{k+1},\cdots,y_{t}$ such that $l_{uy_{k+1}},\cdots,l_{uy_{t}}$ and $l_{wy_{k+1}},\cdots,l_{wy_{t}}$ are $t$ different edges, respectively. Then $l_{uy_{1}},\cdots,l_{uy_{t}}$ and $l_{wy_{1}},\cdots,l_{wy_{t}}$ are $t$ different edges, respectively. Similarly, $l_{vy_{1}},\cdots,l_{vy_{t}}$, $l_{uy_{1}},\cdots,l_{uy_{t}}$, $l_{wy_{1}},\cdots,l_{wy_{t}}$ are $3t$ different edges and form a Berge-$K_{3,t}$ in $H$.
\end{proof}

\begin{figure}[h]
  \centering
  \includegraphics[scale=0.6]{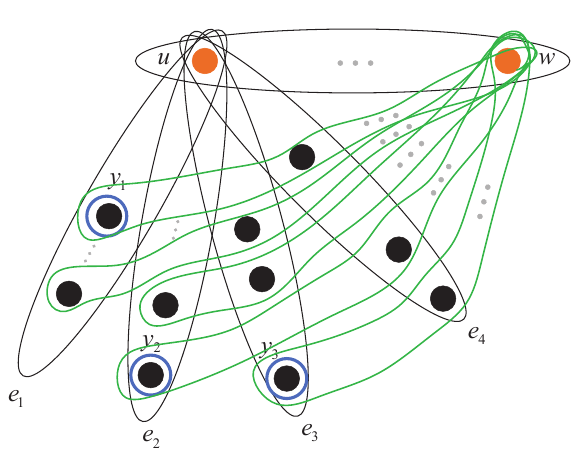}\\
  \caption{\small{Example of Remark 2.2.}}\label{fig1}
\end{figure}

\begin{rem}
Note that the restriction of (2.2) on $y_j$ is necessary. Otherwise, we may cannot select $y_{k+b+1}$ when $k+b\leq t-1$. For an example: when $k=0$, $r=t=4$ and $|W|=(t-1)(r-1)+1=10$, if we ignore the restriction of (2.2) on $y_3$, then we cannot select $y_4$ in this $4$-graph (see Figure 1), where the vertices in $W$ are represented by black dots.
\end{rem}

\begin{figure}[h]
  \centering
  \includegraphics[scale=0.6]{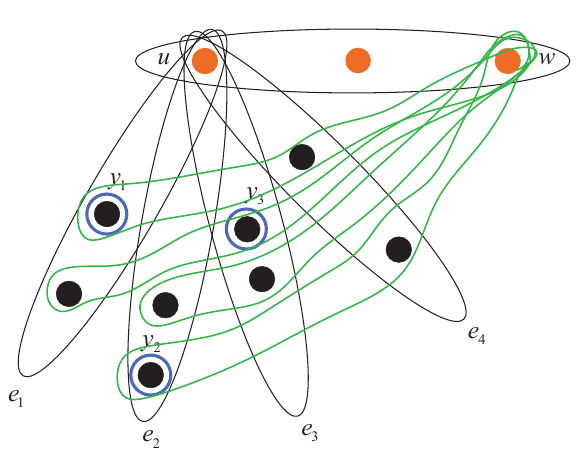}\\
  \caption{\small{Example of Remark 2.3.}}\label{fig2}
\end{figure}

\begin{rem}
Note that the restriction of (2.3) on $y_j$ is necessary. Otherwise, we may cannot select $y_{k+b+1}$ when $k+b\leq t-1$. For an example: when $k=0$, $b=r=3$, $s=t=4$ and $|W|=8>7=(t-1)(r-1)+1$, if we ignore the restriction of (2.3) on $y_2$, then we cannot select $y_4$ in this $3$-graph (see Figure 2), where the vertices in $W$ are represented by black dots.
\end{rem}

\section{\normalsize Linear Tur\'{a}n number of Berge-$K_{3,t}$}
\ \ \ \ \
In this section, we prove an upper bound for the linear Tur\'{a}n number of Berge-$K_{3,t}$ by using the stability result (see Theorem 2.1) of Section 2.
Let $\mathcal{H}_{3,t}^{n,r}$ be the set of connected Berge-$K_{3,t}$-free linear $r$-graphs wtih $n$ vertices, where $t,r\geq3$.

For any two adjacent vertices $u,w\in V(H)$, set
\begin{eqnarray*}
N_1(u,w)&=&\big\{v\,\big|\,v\in N_{uw}\ \text{and}\ v\notin l_{uw}\big\}, \\
N_2(u,w)&=&\big\{v\,\big|\,v\in N_u(H)\backslash(N_w(H)\cup \{w\})\big\}.
\end{eqnarray*}
Then $N_1(u,w)$, $N_2(u,w)$ and $l_{uw}\backslash\{u\}$ are pairwisely disjoint and $$N_{u}(H)=N_1(u,w){\cup} N_2(u,w){\cup} l_{uw}\backslash\{u\}.$$
For a set of vertices $U\subseteq V(H)$, let $E^k(U)=\{e\,\big|\,|e\cap U|=k, e\in E(H)\}$ and $E_v^k(U)=\{e\,\big|\,|e\cap U|=k, v\in e\in E(H)\}$. The cardinalities of $E^k(U)$ and $E_v^k(U)$ are denoted by $e^k(U)$ and $e_v^k(U)$, respectively.

\begin{lem} Suppose that $n\geq r\geq3$ and $H$ is a linear $r$-graph wtih $n$ vertices. Let $u,w\in V(H)$ be any two adjacent vertices. Then
\begin{kst}
\item[(\romannumeral1)] $e^0_v(N_{uw})=0$ for any $v\in N_1(u,w)$ or $v\in l_{uw}\backslash\{u,w\}$, and $e^r_v(N_{uw})=0$ for any $v\in N_2(u,w)$;
\item[(\romannumeral2)] $$\small\sum_{v\in N_{u}(H)}d_v=\sum_{v\in N_1(u,w)}\sum_{i=1}^{r}e^i_v(N_{uw})+\sum_{v\in N_2(u,w)}\sum_{i=0}^{r-1}e^i_v(N_{uw})+\sum_{v\in l_{uw}\backslash\{u,w\}}\sum_{i=1}^{r}e^i_v(N_{uw})+d_{w}.$$
\end{kst}
\end{lem}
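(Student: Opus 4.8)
The plan is to prove both parts by a careful bookkeeping of the edges incident to the vertices of $N_u(H)$, organized according to the partition $N_u(H) = N_1(u,w)\,\dot\cup\, N_2(u,w)\,\dot\cup\,(l_{uw}\backslash\{u\})$. For part (i), I would argue locally at each vertex $v$. If $v\in N_1(u,w)$ then $v\in N_{uw}$, so any edge through $v$ contains $v$ itself, a vertex of $N_{uw}$; hence $|e\cap N_{uw}|\ge 1$ and $e^0_v(N_{uw})=0$. The same reasoning applies when $v\in l_{uw}\backslash\{u,w\}$, since such a $v$ lies on the edge $l_{uw}$, is adjacent to both $u$ and $w$, and therefore also lies in $N_{uw}$. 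For the second assertion, let $v\in N_2(u,w)$; then $v\notin N_w(H)$ and $v\ne w$, so $v$ is \emph{not} adjacent to $w$. If some edge $e$ through $v$ had $|e\cap N_{uw}|=r$, then $e\subseteq N_{uw}\subseteq N_w(H)$, which (for $r\ge2$) forces $v$ itself into $N_w(H)$ — actually even more directly, $e$ would then be an edge containing both $v$ and any vertex of $N_{uw}$, but the key point is that $e\subseteq N_{uw}$ gives $v\in N_{uw}\subseteq N_w(H)$, contradicting $v\in N_2(u,w)$. So $e^r_v(N_{uw})=0$.

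For part (ii), the identity is just a reorganization of the double count $\sum_{v\in N_u(H)} d_v = \sum_{v\in N_u(H)} |E_v(H)|$. I would split the outer sum over the three blocks of the partition of $N_u(H)$, and within each block break $E_v(H)$ up by the value $i = |e\cap N_{uw}|$, which ranges over $0,1,\dots,r$ in general. Here part (i) is exactly what trims the ranges of summation: for $v\in N_1(u,w)$ and for $v\in l_{uw}\backslash\{u,w\}$ the term $i=0$ vanishes, so the inner sum runs from $1$ to $r$; for $v\in N_2(u,w)$ the term $i=r$ vanishes, so the inner sum runs from $0$ to $r-1$. The block $l_{uw}\backslash\{u\}$ contains $w$ together with the vertices $l_{uw}\backslash\{u,w\}$; the contribution of $w$ is precisely $d_w$, and this is why the stray $d_w$ appears as the last summand. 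Assembling the three blocks then gives the displayed formula verbatim.

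I do not anticipate a genuine obstacle here; the statement is essentially definitional once the partition is in place. The one point that needs a little care is the treatment of the edge $l_{uw}$ and its interior vertices: one must check that every vertex of $l_{uw}\backslash\{u,w\}$ really does lie in $N_{uw}$ (so that part (i) applies to it) and that $w$ is accounted for exactly once, as the isolated $d_w$ term rather than inside any of the indexed sums. A second, very minor subtlety is making sure the ranges $1\le i\le r$ and $0\le i\le r-1$ are the tightest that hold uniformly over the respective blocks — i.e., that no further terms can be dropped in general — which is clear since a vertex of $N_1(u,w)$ can lie on an edge entirely inside $N_{uw}$ and a vertex of $N_2(u,w)$ can lie on an edge meeting $N_{uw}$ only in $u$. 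With those checks done, the proof is a direct expansion.
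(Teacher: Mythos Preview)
Your proposal is correct and follows essentially the same route as the paper: part (i) is proved by noting $v\in N_{uw}$ for $v\in N_1(u,w)\cup(l_{uw}\backslash\{u,w\})$ and $v\notin N_{uw}$ for $v\in N_2(u,w)$, and part (ii) is obtained by writing $d_v=\sum_{i=0}^{r}e^i_v(N_{uw})$, splitting over the partition $N_u(H)=N_1(u,w)\cup N_2(u,w)\cup(l_{uw}\backslash\{u\})$, and trimming the ranges via (i). Your extra remarks about tightness of the index ranges are not needed for the lemma but do no harm.
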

\begin{proof}[{\bf{Proof}}]
$(\romannumeral1)$. For any $v\in N_1(u,w)$ or $v\in l_{uw}\backslash\{u,w\}$, we have $v\in N_{uw}$. Then for $e\in E_v(H)$, we have $|e\cap N_{uw}|\geq1$. Thus $e^0_v(N_{uw})=0$. 
For any $v\in N_2(u,w)$, we have $v\notin N_{uw}$. Then for $e\in E_v(H)$, we have $|e\cap N_{uw}|\leq r-1$. Thus $e^r_v(N_{uw})=0$. 

$(\romannumeral2)$. Note that $d_v=\sum_{i=0}^{r}e^i_v(N_{uw})$ for any $v\in V(H)$. Combining the results of $(\romannumeral1)$, we have
\begin{small}
\begin{eqnarray*}
\sum_{v\in N_{u}(H)}d_v&=&\sum_{v\in N_{u}(H)}\sum_{i=0}^{r}e^i_v(N_{uw}) \\
&=&\sum_{v\in N_1(u,w)}\sum_{i=0}^{r}e^i_v(N_{uw})+\sum_{v\in N_2(u,w)}\sum_{i=0}^{r}e^i_v(N_{uw})+\sum_{v\in l_{uw}\backslash\{u,w\}}\sum_{i=0}^{r}e^i_v(N_{uw})+d_w \\
&=&\sum_{v\in N_1(u,w)}\sum_{i=1}^{r}e^i_v(N_{uw})+\sum_{v\in N_2(u,w)}\sum_{i=0}^{r-1}e^i_v(N_{uw})+\sum_{v\in l_{uw}\backslash\{u,w\}}\sum_{i=1}^{r}e^i_v(N_{uw})+d_{w}.
\end{eqnarray*}
\end{small}
\end{proof}

\begin{lem} Let $n\geq r\geq3$ and $u,w\in V(H)$ be any two adjacent vertices, where $H\in \mathcal{H}_{3,t}^{n,r}$.
\begin{kst}
\item[(\romannumeral1)] For any $v\in N_1(u,w)$, we have $\sum_{i=2}^{r}e^i_v(N_{uw})\leq t(r-1)+1$;
\item[(\romannumeral2)] For any $v\in N_2(u,w)$, we have $\sum_{i=1}^{r-1}e^i_v(N_{uw})\leq t(r-1)$;
\item[(\romannumeral3)] For any $v\in l_{uw}\backslash\{u,w\}$, we have $\sum_{i=2}^{r}e^i_v(N_{uw})\leq (t-1)(r-1)+1$.
\end{kst}
\end{lem}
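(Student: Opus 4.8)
The proof of Lemma 3.3 should be a direct application of the stability result of Theorem 2.1, together with the structural observations of Lemma 3.1. The plan is to argue by contradiction in each of the three cases: assuming the relevant sum of $e^i_v(N_{uw})$-values exceeds the stated bound, I will exhibit a large set $W$ of common neighbors of $v$, $u$, $w$ meeting the hypotheses of Theorem 2.1, which then forces a Berge-$K_{3,t}$ and contradicts $H\in\mathcal{H}_{3,t}^{n,r}$.

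For part $(\romannumeral1)$, fix $v\in N_1(u,w)$, so $v\in N_{uw}$ and $v\notin l_{uw}$. The quantity $\sum_{i=2}^r e^i_v(N_{uw})$ counts edges through $v$ that meet $N_{uw}$ in at least two vertices, i.e.\ that contain, besides $v$, at least one further vertex of $N_{uw}$. First I would discard the (at most two) edges $l_{vu}$ and $l_{vw}$ — these are the edges through $v$ that might land in $N_{uw}$ "for free" via $u$ or $w$ — and also delete vertices lying in $l_{vu}\cup l_{vw}\cup l_{uw}$ from consideration. From each remaining edge $e\ni v$ with $|e\cap N_{uw}|\geq 2$, I can pick a vertex $u_e\in (e\cap N_{uw})\setminus(l_{vu}\cup l_{vw}\cup l_{uw})$; by linearity the edges $l_{vu_e}$ coincide with $e$ itself on the $v$-side, so distinct such edges give vertices $u_e$ with distinct $l_{vu_e}$. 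If $\sum_{i=2}^r e^i_v(N_{uw})\geq t(r-1)+2$, then after removing the two exceptional edges $l_{vu},l_{vw}$ there remain at least $t(r-1)$... — more carefully, I want to reach $|W|\geq (t-1)(r-1)+1$, so I should track the bookkeeping so that subtracting the contributions of $l_{vu},l_{vw}$ (and the up-to-$r-1$ vertices each of $l_{uw}$, $l_{vu}$, $l_{vw}$ sitting in $N_{uw}$) from the assumed $t(r-1)+2$ still leaves a set $W$ of size at least $(t-1)(r-1)+1$ with the "distinct $l_{vu_1}\neq l_{vu_2}$" property. Then Theorem 2.1 applies and yields the contradiction.

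Parts $(\romannumeral2)$ and $(\romannumeral3)$ are variations on the same theme with adjusted constants. For $(\romannumeral2)$, $v\in N_2(u,w)$ is adjacent to $u$ but not to $w$, so $v\notin N_{uw}$ and there is no edge $l_{vw}$; the sum $\sum_{i=1}^{r-1}e^i_v(N_{uw})$ counts edges through $v$ meeting $N_{uw}$ (necessarily in at most $r-1$ vertices since $v$ itself is not in $N_{uw}$, which is exactly the content of Lemma 3.1$(\romannumeral1)$). Here only $l_{vu}$ plays an exceptional role, which explains why the bound improves to $t(r-1)$; again I select one representative vertex of $N_{uw}$ from each such edge, avoiding $l_{vu}\cup l_{uw}$, and assemble $W$. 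For $(\romannumeral3)$, $v\in l_{uw}\setminus\{u,w\}$ so $v$ is adjacent to both $u$ and $w$ via the \emph{same} edge $l_{uw}=l_{vu}=l_{vw}$; this single shared edge is the only exception, but the skeleton we try to build already uses the three edges at $v$ incident to $u$, $w$ — since $l_{vu}=l_{vw}=l_{uw}$ the counting is tighter and produces the bound $(t-1)(r-1)+1$, i.e.\ exactly the threshold of Theorem 2.1 minus one, so anything strictly larger gives a Berge-$K_{3,t}$.

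The routine part is the inclusion–exclusion bookkeeping: in each case one must verify that after deleting at most a bounded number of edges and at most $O(r)$ vertices of $N_{uw}$ that lie on the forbidden lines, the residual set $W$ is still large enough and still satisfies $l_{vu_1}\neq l_{vu_2}$ for distinct members. The main subtlety — and where I expect to spend the most care — is part $(\romannumeral1)$ versus part $(\romannumeral3)$: getting the additive constants exactly right ($t(r-1)+1$ vs.\ $(t-1)(r-1)+1$) hinges on correctly accounting for how many of the up-to-$r-1$ vertices of $l_{vu}$, $l_{vw}$, $l_{uw}$ actually lie inside $N_{uw}$ and hence must be excised from $W$, and on whether the edges $l_{vu}$, $l_{vw}$ are among the edges counted by $\sum e^i_v(N_{uw})$. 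I would handle this by writing, for each case, the precise chain of inequalities $|W|\geq \big(\text{assumed sum}\big) - \big(\text{exceptional edges}\big)\cdot(r-1) - (\text{vertices on forbidden lines}) \geq (t-1)(r-1)+1$, choosing the "assumed sum" to be one more than the claimed bound, and then invoking Theorem 2.1.
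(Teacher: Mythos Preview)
Your strategy is correct and is exactly the paper's approach: argue by contradiction, select one representative vertex from each of sufficiently many edges through $v$ to assemble $W\subseteq N_{vuw}\setminus(l_{vu}\cup l_{vw}\cup l_{uw})$, and invoke Theorem~2.1. The only real issue is that your final bookkeeping is muddled. The counting should be done entirely in terms of \emph{edges}, since $|W|$ equals the number of edges retained (one $W$-vertex per edge; distinct edges through $v$ automatically give $l_{vu_1}\neq l_{vu_2}$). The step you have not isolated explicitly is this: in cases $(\romannumeral1)$ and $(\romannumeral2)$, after discarding $l_{vu}$ (and $l_{vw}$ in $(\romannumeral1)$), at most $r-2$ further edges through $v$ can meet $l_{uw}$, because $l_{uw}\setminus\{u,w\}$ has $r-2$ vertices and by linearity each lies on at most one edge through $v$. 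Discarding those as well leaves, respectively, $(t(r-1)+2)-2-(r-2)$, $(t(r-1)+1)-1-(r-2)$, and (in $(\romannumeral3)$, where $v\in l_{uw}$ so every other edge through $v$ already meets $l_{uw}$ only at $v$) $((t-1)(r-1)+2)-1$ edges, each equal to $(t-1)(r-1)+1$. From every surviving edge one then picks a vertex of $(e\cap N_{uw})\setminus\{v\}$, which necessarily lies in $N_{vuw}\setminus(l_{vu}\cup l_{vw}\cup l_{uw})$.

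Your proposed chain $|W|\geq(\text{assumed sum})-(\text{exceptional edges})\cdot(r-1)-(\text{vertices on forbidden lines})$ conflates edge and vertex counts and will not recover the exact constants; the correct subtraction is of edges only, with no factor of $r-1$ and no separate vertex term.
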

\begin{proof}[{\bf{Proof}}]
$(\romannumeral1)$. Suppose to the contrary  that there is a vertex $v\in N_1(u,w)$ such that $\sum_{i=2}^{r}e^i_v(N_{uw})\geq t(r-1)+2$. Then we have
$$\big|\bigcup_{i=2}^{r}E^i_v(N_{uw})\backslash \{l_{vu},l_{vw}\}\big|\geq \sum_{i=2}^{r}e^i_v(N_{uw})-2\geq t(r-1).$$

Since $H$ is linear, there are at most $r-2$ edges in $\bigcup_{i=2}^{r}E^i_v(N_{uw})\backslash \{l_{vu},l_{vw}\}$ that intersect with $l_{uw}$.
Then from $\bigcup_{i=2}^{r}E^i_v(N_{uw})\backslash \{l_{vu},l_{vw}\}$, we may take $t(r-1)-(r-2)=(t-1)(r-1)+1$ edges disjoint from $l_{uw}$.
Noting that for any $e\in E^i_v(N_{uw})\backslash \{l_{vu},l_{vw}\}$, we have $e\cap l_{vu}=\{v\}$ and $e\cap l_{vw}=\{v\}$. Then the edge $e$ contains $i-1\geq1$ vertices in $N_{vuw}\backslash (l_{vu}\cup l_{vw})$.
We may take a vertex in $N_{vuw}\backslash (l_{vu}\cup l_{vw})$ from each of these $(t-1)(r-1)+1$ edges, and denote the set of these vertices by $W$. Then $|W|=(t-1)(r-1)+1$. Since $v\notin l_{uw}$, $l_{uw}\notin \bigcup_{i=2}^{r}E^i_v(N_{uw})$ and $W\cap l_{uw}=\emptyset$. Then $W\subseteq N_{vuw}\backslash (l_{vu}\cup l_{vw}\cup l_{uw})$. Clearly, for any $u_1,u_2\in W$, we have $l_{vu_1}\neq l_{vu_2}$. By Theorem 2.1, $H$ contains a Berge-$K_{3,t}$, which is a contradiction.

$(\romannumeral2)$. Suppose to the contrary  that there is a vertex $v\in N_2(u,w)$ such that $\sum_{i=1}^{r-1}e^i_v(N_{uw})\geq t(r-1)+1$. Then we have
$$\big|\bigcup_{i=1}^{r-1}E^i_v(N_{uw})\backslash \{l_{vu}\}\big|\geq \sum_{i=1}^{r-1}e^i_v(N_{uw})-1\geq t(r-1).$$

Since $H$ is linear, there are at most $r-2$ edges in $\bigcup_{i=1}^{r-1}E^i_v(N_{uw})\backslash \{l_{vu}\}$ that intersect with $l_{uw}$.
Then from $\bigcup_{i=1}^{r-1}E^i_v(N_{uw})\backslash \{l_{vu}\}$, we may take $t(r-1)-(r-2)=(t-1)(r-1)+1$ edges disjoint from $l_{uw}$.
Noting that for any $e\in E^i_v(N_{uw})\backslash \{l_{vu}\}$, we have $e\cap l_{vu}=\{v\}$. Then the edge $e$ contains $i\geq1$ vertices in $N_{vuw}\backslash l_{vu}$.
We may take a vertex in $N_{vuw}\backslash l_{vu}$ from each of these $(t-1)(r-1)+1$ edges, and denote the set of these vertices by $W$. Then $|W|=(t-1)(r-1)+1$. Since $v\notin l_{uw}$, $l_{uw}\notin \bigcup_{i=1}^{r-1}E^i_v(N_{uw})$ and $W\cap l_{uw}=\emptyset$. Then $W\subseteq N_{vuw}\backslash (l_{vu}\cup l_{uw})$. Noting that for $v\in N_2(u,w)$, we have $l_{vw}=\emptyset$ and $N_{vuw}\backslash (l_{vu}\cup l_{vw}\cup l_{uw})=N_{vuw}\backslash (l_{vu}\cup l_{uw})$. Clearly, for any $u_1,u_2\in W$, we have $l_{vu_1}\neq l_{vu_2}$. By Theorem 2.1, $H$ contains a Berge-$K_{3,t}$, which is a contradiction.

$(\romannumeral3)$. Suppose to the contrary  that there is a vertex $v\in l_{uw}\backslash\{u,w\}$ such that $\sum_{i=2}^{r}e^i_v(N_{uw})\geq (t-1)(r-1)+2$. Then we have
$$\big|\bigcup_{i=2}^{r}E^i_v(N_{uw})\backslash \{l_{uw}\}\big|\geq \sum_{i=2}^{r}e^i_v(N_{uw})-1\geq (t-1)(r-1)+1.$$

Since $H$ is linear, for any $e\in\bigcup_{i=2}^{r}E^i_v(N_{uw})\backslash \{l_{uw}\}$, we have $e\cap l_{uw}=\{v\}$. Then the edge $e$ contains $i-1\geq1$ vertices in $N_{vuw}\backslash l_{uw}$. From $\bigcup_{i=2}^{r}E^i_v(N_{uw})\backslash \{l_{uw}\}$, we may take $(t-1)(r-1)+1$ edges.
We may take a vertex in $N_{vuw}\backslash l_{uw}$ from each of these $(t-1)(r-1)+1$ edges, and denote the set of these vertices by $W$. Then $W\subseteq N_{vuw}\backslash l_{uw}$ and $|W|=(t-1)(r-1)+1$.
Noting that for $v\in l_{uw}\backslash\{u,w\}$, we have $l_{vu}=l_{vw}=l_{uw}$ and $N_{vuw}\backslash (l_{vu}\cup l_{vw}\cup l_{uw})=N_{vuw}\backslash l_{uw}$. Clearly, for any $u_1,u_2\in W$, we have $l_{vu_1}\neq l_{vu_2}$. By Theorem 2.1, $H$ contains a Berge-$K_{3,t}$, which is a contradiction.
\end{proof}

Recall that for $n\geq r\geq2$, $t\geq3$ and $x,y\in \mathbb{R}$, we have
\begin{eqnarray*}
f(n,r,t,x,y)&:=& \bigg(t(r-1)+\frac{(r-2)(n-r)}{2(r-1)}+\frac{n-r}{(r-1)^2}\bigg)x+\frac{1}{r-1}y \notag\\
&{}&-\bigg(t+r-2+\frac{(r-2)(n-r-2)}{2(r-1)}-\frac{(r-2)(r-3)(n-r)}{2(r-1)^2}\bigg).
\end{eqnarray*}

\begin{lem} Let $n\geq r\geq3$ and $u,w\in V(H)$ be any two adjacent vertices, where $H\in \mathcal{H}_{3,t}^{n,r}$. Then
\begin{eqnarray*}
\sum_{v\in N_{u}(H)\backslash \{w\}}d_v\leq (r-1)f(n,r,t,d_u,d_w)-d_w.
\end{eqnarray*}
\end{lem}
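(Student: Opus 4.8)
My plan is to bound $\sum_{v\in N_u(H)\setminus\{w\}}d_v$ crudely, using only linearity, and then reduce to a purely algebraic inequality. Two elementary facts about a linear $r$-graph: for any vertex $z$, the $d_z$ edges through $z$ pairwise meet only in $z$, so their $(r-1)$-element tails partition $N_z(H)$ into $d_z$ blocks; hence $|N_z(H)|=(r-1)d_z$, and since all these tails avoid $z$, also $d_z\le\frac{n-1}{r-1}$. Applying the first fact with $z=u$ shows $N_u(H)\setminus\{w\}$ has exactly $(r-1)d_u-1$ vertices; bounding each of their degrees by the second fact gives
$$\sum_{v\in N_u(H)\setminus\{w\}}d_v\ \le\ \big((r-1)d_u-1\big)\frac{n-1}{r-1}.$$
It therefore suffices to prove the inequality $\big((r-1)d_u-1\big)\frac{n-1}{r-1}\le(r-1)f(n,r,t,d_u,d_w)-d_w$, which involves no hypergraph at all.

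Expanding, $(r-1)f(n,r,t,x,y)-y=\big(t(r-1)^2+\frac{(r-2)(n-r)}{2}+\frac{n-r}{r-1}\big)x-(r-1)(t+r-2)-\frac{(r-2)(n-r-2)}{2}+\frac{(r-2)(r-3)(n-r)}{2(r-1)}$; the summand $\frac{1}{r-1}y$ of $f$ is tailored precisely so that the $d_w$ contributions cancel, and the target takes the form $P\,d_u+Q\ge0$ with $P,Q$ functions of $n,r,t$ only. A short computation gives $P=t(r-1)^2+\frac{(n-r)(r^2-3r+4)}{2(r-1)}-(n-1)$, which is positive (for $r=3$ this reads $P=4t-2$, and for $r\ge4$ the $O(n)$ part is nonnegative since $r^2-3r+4\ge2(r-1)$, while $t(r-1)^2$ absorbs the remaining constant), and $P+Q=(r-1)(r-2)(t-1)+\frac{(r-3)(r-4)(n-r)}{2(r-1)}\ge0$ for all $n\ge r$. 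Since $d_u\ge1$ (as $u$ lies on $l_{uw}$), $P\,d_u+Q=P(d_u-1)+(P+Q)\ge0$, finishing the argument.

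The route suggested by the placement of Lemmas~3.1 and~3.2 is the alternative of expanding $\sum_{v\in N_u(H)\setminus\{w\}}d_v$ by Lemma~3.1(ii) into $A+B+C$ over $N_1(u,w)$, $N_2(u,w)$, $l_{uw}\setminus\{u,w\}$; peeling off the $i=1$ (resp.\ $i=0$) summand of each inner sum and bounding the remainders by Lemma~3.2(i)--(iii) (weights $t(r-1)+1$, $t(r-1)$, $(t-1)(r-1)+1$ per vertex); collapsing these via $|N_1(u,w)|+|N_2(u,w)|=(r-1)(d_u-1)$; and then handling the surviving light part $e^1(N_{uw})+\sum_{e\cap N_{uw}=\emptyset}|e\cap N_2(u,w)|$ (which is what the leftover $e^0$, $e^1$ counts amount to, using Lemma~3.1(i) and the fact that an edge meeting $N_{uw}$ in one vertex is counted once) again by the two elementary facts above together with $|N_{uw}|\le(r-1)d_u-1$. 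Either version gives the same bound. In both, the step I expect to cost the most effort is clerical rather than conceptual: keeping the three vertex families and the $e^i_v(N_{uw})$ counts straight, and executing the final comparison with $f$ -- in particular checking, for $r=3$ and $r=4$ where the $O(n)$ parts of the two sides coincide, that the residual $O(1)$ has the right sign (for $r=3$ it reduces to $2t\le(4t-2)d_u$); I would verify those small cases explicitly.
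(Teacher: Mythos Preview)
Your crude approach is correct. The algebra checks out: with $P=t(r-1)^2+\frac{(n-r)(r^2-3r+4)}{2(r-1)}-(n-1)$ one can rewrite $P=(r-1)\big(t(r-1)-1\big)+\frac{(r-2)(r-3)(n-r)}{2(r-1)}>0$ for all $r\ge3$, $t\ge3$, $n\ge r$, and your formula $P+Q=(r-1)(r-2)(t-1)+\frac{(r-3)(r-4)(n-r)}{2(r-1)}$ is right and nonnegative (both factors $(r-3)(r-4)$ and $(r-2)(t-1)$ are nonnegative in the stated range). Since $d_u\ge1$, $P\,d_u+Q\ge P+Q\ge0$ finishes it. So the trivial linear-hypergraph bound $\sum_{v\in N_u(H)\setminus\{w\}}d_v\le\big((r-1)d_u-1\big)\frac{n-1}{r-1}$ already implies the lemma.

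This is a genuinely different route from the paper's. The paper invests heavily in the Berge-$K_{3,t}$-free hypothesis: it splits $N_u(H)$ into $N_1(u,w)$, $N_2(u,w)$, $l_{uw}\setminus\{u,w\}$, defines for each $v$ a set $L_v$ of ``far'' neighbours reached through low-$N_{uw}$-intersection edges, uses Lemma~3.2 (hence the stability Theorem~2.1) to bound $|L_v|$ from \emph{below} by $(r-1)(d_v-O(t\,r))$, and then bounds $\sum_v|L_v|$ from \emph{above} via inclusion--exclusion on the pairwise intersections $|L_{v_p}\cap L_{v_q}|\le n-r-2$ and the blockwise caps $|L_i|\le n-2r+1$. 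Matching the two bounds yields the stated inequality. Your argument bypasses all of this structure and uses only linearity; in particular you never invoke the $K_{3,t}$-free assumption. What this buys you is a two-line proof; what it reveals is that the inequality of Lemma~3.3, as written, is no sharper than the universal linear-hypergraph bound (indeed $(r-1)f(n,r,t,d_u,d_w)-d_w\ge\big((r-1)d_u-1\big)\frac{n-1}{r-1}$ holds identically for $r,t\ge3$). The paper's machinery is therefore doing no work in the final displayed bound, even though the intermediate estimates coming out of Lemma~3.2 are genuinely nontrivial.

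Your ``alternative route'' sketch in the last paragraph is roughly in the spirit of the paper, but note that the paper's argument is not simply a count of $e^0,e^1$ terms: the inclusion--exclusion step on the $L_v$ sets (producing the $\binom{r-1}{2}(n-r-2)$ and $\binom{r-2}{2}(n-r)$ corrections) is where the $\frac{(r-2)(n-r)}{2(r-1)}$ coefficient in $f$ actually arises, and that step is more delicate than ``handling the surviving light part by the two elementary facts''. If you ever want to reproduce the paper's line of reasoning rather than your shortcut, that is the place to focus.
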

\begin{proof}[{\bf{Proof}}]
Let $e_i$ be all edges containing $u$, where $0\leq i\leq d_{u}-1$ and $e_0=l_{uw}$. Set
\begin{eqnarray*}
L_{v}&=&
\begin{cases}
\big\{w'\, \big|\, w'\in e\in E^1_v(N_{uw})\backslash\{l_{uv}\}, w'\neq v\big\},\ \ \text{if}\ v\in N_1(u,w), \\
\big\{w'\, \big|\, w'\in e\in E^0_v(N_{uw})\backslash\{l_{uv}\}, w'\neq v\big\},\ \ \text{if}\ v\in N_2(u,w), \\
\big\{w'\, \big|\, w'\in e\in E^1_v(N_{uw}), w'\neq v\big\},\ \ \text{if}\ v\in e_0\backslash\{u,w\},
\end{cases} \\
L_0&=&\bigcup_{v\in e_0\backslash \{u,w\}}L_{v}, \\
L_i&=&\bigcup_{v\in e_i\backslash \{u\}}L_{v},\ \ 1\leq i\leq d_{u}-1.
\end{eqnarray*}

Since $H$ is linear, by Lemmas 3.1$(\romannumeral1)$ and 3.2, we have
\begin{eqnarray*}
e^1_v(N_{uw})=d_v-e^0_v(N_{uw})-\sum_{i=2}^re^i_v(N_{uw})=d_v-\sum_{i=2}^re^i_v(N_{uw})\geq d_v-\big(t(r-1)+1\big)
\end{eqnarray*}
for any $v\in N_1(u,w)$. Then
\begin{eqnarray*}
|L_{v}|=(r-1)e^1_v(N_{uw})\geq(r-1)\big(d_v-t(r-1)-1\big)
\end{eqnarray*}
for any $v\in N_1(u,w)$. Similarly, we have
\begin{gather*}
|L_{v}|=(r-1)e^0_v(N_{uw})\geq (r-1)\big(d_v-t(r-1)\big),\ v\in N_2(u,w), \\
|L_{v}|=(r-1)e^1_v(N_{uw})\geq (r-1)\big(d_v-(t-1)(r-1)-1\big),\ v\in e_0\backslash\{u,w\}.
\end{gather*}
Thus
\begin{eqnarray}
\sum_{v\in N_{u}(H)\backslash \{w\}}|L_v|&\geq& \sum_{v\in N_1(u,w)}(r-1)\big(d_v-t(r-1)-1\big)+\sum_{v\in N_2(u,w)}(r-1)\big(d_v-t(r-1)\big) \notag\\
&{}& +\sum_{v\in e_0\backslash\{u,w\}}(r-1)\big(d_v-(t-1)(r-1)-1\big) \notag\\
&\geq& (r-1)\Big(\sum_{v\in N_{u}(H)\backslash \{w\}}d_v\Big)-(r-1)(r-2)\big((t-1)(r-1)+1\big) \notag\\
&{}& -(r-1)\big((r-1)d_{u}-r+1\big)\big(t(r-1)+1\big).
\end{eqnarray}

Let $e_i=\{u,v_{2,{i}},\cdots,v_{r,i}\}$, where $1\leq i\leq d_{u}-1$. For $2\leq p<q\leq r$, if $v_{p,i},v_{q,i}\notin N_{uw}$, then $(N_{uw}\cup\{u,w\})\cap (L_{v_{p,i}}\cap L_{v_{q,i}})=\emptyset$, $(e_i\backslash\{u\})\cap (L_{v_{p,i}}\cap L_{v_{q,i}})=\emptyset$ and $|N_{uw}|\geq r-2$. Thus
\begin{eqnarray*}
\big|L_{v_{p,i}}\cap L_{v_{q,i}}\big|\leq n-\big|N_{uw}\big|-2-(r-1)+\big|N_{uw}\cap \big(e_i\backslash\{u,v_{p,i},v_{q,i}\}\big)\big|\leq n-r-2.
\end{eqnarray*}
If $v_{p,i}\in N_{uw}, v_{q,i}\notin N_{uw}$, then $|N_{uw}|\geq r-1$. Therefore
\begin{eqnarray*}
\big|L_{v_{p,i}}\cap L_{v_{q,i}}\big|\leq n-\big|N_{uw}\big|-2-(r-2)+\big|N_{uw}\cap \big(e_i\backslash\{u,v_{p,i},v_{q,i}\}\big)\big|\leq n-r-2.
\end{eqnarray*}
If $v_{p,i},v_{q,i}\in N_{uw}$, then $|N_{uw}|\geq r$. Then we get
\begin{eqnarray*}
\big|L_{v_{p,i}}\cap L_{v_{q,i}}\big|\leq n-\big|N_{uw}\big|-2-(r-3)+\big|N_{uw}\cap \big(e_i\backslash\{u,v_{p,i},v_{q,i}\}\big)\big|\leq n-r-2.
\end{eqnarray*}
Then for $1\leq i\leq d_{{u}-1}$, we have
\begin{eqnarray*}
\big|L_i\big|=\big|\bigcup_{v\in e_i\backslash \{u\}}L_{v}\big|=\big|\bigcup_{j=2}^{r}L_{v_{j,i}}\big|&\geq& \sum_{j=2}^{r}\big|L_{v_{j,i}}\big|-\sum_{2\leq p<q\leq r}\big|L_{v_{p,i}}\cap L_{v_{q,i}}\big| \notag\\
&\geq& \sum_{j=2}^{r}\big|L_{v_{j,i}}\big|-\binom{r-1}{2}(n-r-2).
\end{eqnarray*}
Thus
\begin{eqnarray}
\sum_{i=1}^{d_u-1}\sum_{j=2}^{r}\big|L_{v_{j,i}}\big|\leq \sum_{i=1}^{d_u-1}\big|L_i\big|+(d_u-1)\binom{r-1}{2}(n-r-2).
\end{eqnarray}

Let $e_0=\{u,w,v_{3,0},\cdots,v_{r,0}\}$. For $3\leq p<q\leq r$, we have $|L_{v_{p,0}}\cap L_{v_{q,0}}|\leq n-|N_{uw}|-2\leq n-r$ and
\begin{eqnarray*}
\big|L_0\big|=\big|\bigcup_{v\in e_0\backslash \{u,w\}}L_{v}\big|=\big|\bigcup_{j=3}^{r}L_{v_{j,0}}\big|&\geq& \sum_{j=3}^{r}\big|L_{v_{j,0}}\big|-\sum_{3\leq p<q\leq r}\big|L_{v_{p,0}}\cap L_{v_{q,0}}\big| \notag\\
&\geq& \sum_{j=3}^{r}\big|L_{v_{j,0}}\big|-\binom{r-2}{2}(n-r),
\end{eqnarray*}
that is
\begin{eqnarray}
\sum_{j=3}^{r}\big|L_{v_{j,0}}\big|\leq \big|L_0\big|+\binom{r-2}{2}(n-r).
\end{eqnarray}

Note that $N_{u}(H)\backslash \{w\}=\{v_{3,0},\cdots,v_{r,0}\}\cup\{v_{2,i},\cdots,v_{r,i}\, |\, i=1,\cdots,d_u-1\}$. By (3.2) and (3.3), we obtain
\begin{eqnarray}
\sum_{v\in N_{u}(H)\backslash \{w\}}\big|L_v\big|\leq \sum_{i=0}^{d_{u}-1}\big|L_i\big|+(d_{u}-1)\binom{r-1}{2}(n-r-2)+\binom{r-2}{2}(n-r).
\end{eqnarray}

By the definition of $L_v$ and $L_i$, we have $(N_{uw}\cup\{u,w\})\cap L_i=\emptyset$ and $e_i\cap L_i=\emptyset$ for $0\leq i\leq d_{u}-1$. Then
\begin{gather*}
\big|L_i\big|\leq n-\big|N_{uw}\big|-2-(r-1)+\big|N_{uw}\cap e_i\big|\leq n-2r+1,\ 1\leq i\leq d_{u}-1, \\
\big|L_0\big|\leq n-\big|N_{uw}\big|-2-(r-2)+\big|N_{uw}\cap e_0\big|=n-\big|N_{uw}\big|-2\leq n-r.
\end{gather*}
Hence, we get
\begin{eqnarray}
\sum_{i=0}^{d_{u}-1}\big|L_i\big|\leq \sum_{i=1}^{d_{u}-1}(n-2r+1)+(n-r)\leq (d_{u}-1)(n-2r+1)+n-r.
\end{eqnarray}

By (3.4) and (3.5), we obtain
\begin{small}
\begin{eqnarray}
\sum_{v\in N_{u}(H)\backslash \{w\}}\big|L_v\big|&\leq& (d_{u}-1)(n-2r+1)+n-r+(d_{u}-1)(n-r-2)\binom{r-1}{2}+(n-r)\binom{r-2}{2} \notag\\
&=& (d_{u}-1)\bigg(n-2r+1+\binom{r-1}{2}(n-r-2)\bigg)+(n-r)\Bigg(\binom{r-2}{2}+1\Bigg).
\end{eqnarray}
\end{small}

By the lower bound in (3.1) and the upper bound in (3.6) for $\sum_{v\in N_{u}(H)\backslash \{w\}}\big|L_v\big|$, we get
\begin{small}
\begin{eqnarray*}
(r-1)\Big(\sum_{v\in N_{u}(H)\backslash \{w\}}d_v\Big)&\leq& 2\binom{r-1}{2}\Big((t-1)(r-1)+1\Big)+(n-r)\Bigg(\binom{r-2}{2}+1\Bigg) \\
&{}& +(d_{u}-1)\Bigg((r-1)^2\big(t(r-1)+1\big)+\binom{r-1}{2}(n-r-2)+n-2r+1\Bigg),
\end{eqnarray*}
\end{small}
that is
\begin{eqnarray*}
\sum_{v\in N_{u}(H)\backslash \{w\}}d_v&\leq&(r-2)\big((t-1)(r-1)+1\big)+(d_{u}-1)(r-1)\big(t(r-1)+1\big)+\frac{n-r}{r-1} \\
&{}& +(d_{u}-1)\bigg(\frac{n-r}{r-1}-1+\frac{(r-2)(n-r-2)}{2}\bigg)+\binom{r-2}{2}\frac{n-r}{r-1} \\
&=& \bigg(t(r-1)^2+\frac{(r-2)(n-r)}{2}+\frac{n-r}{r-1}\bigg)d_{u}-(t+r-2)(r-1) \\
&{}& -\frac{(r-2)(n-r-2)}{2}+\frac{(r-2)(r-3)(n-r)}{2(r-1)} \\
&=& (r-1)f(n,r,t,d_u,d_w)-d_w.
\end{eqnarray*}
\end{proof}

\begin{proof}[{\bf{Proof of Theorem 1.1.}}]
Let $H\in \mathcal{H}_{3,t}^{n,r}$ and $u,w\in V(H)$ be any two adjacent vertices. Since $H$ is a connected linear $r$-graphs with $n$ vertices, we have the largest degree $\Delta(H)\leq \frac{n-1}{r-1}$ for any $u\in V(H)$. By Lemma 3.3, we get
\begin{eqnarray*}
\sum_{v\in N_{u}(H)}d_v\leq (r-1)f(n,r,t,d_u,d_w)\leq (r-1)f(n,r,t,d_u,\frac{n-1}{r-1}),
\end{eqnarray*}
where the last inequality holds because when $n,r,t$ and $d_u$ are fixed, the larger $d_{w}$ is, the larger $f(n,r,t,d_u,d_w)$ is.

Denote by $d(H)$ the average degree of $H$. Then
\begin{small}
\begin{eqnarray}
\sum_{u\in V(H)}\sum_{v\in N_{u}(H)}d_v&\leq& \sum_{u\in V(H)}(r-1)f(n,r,t,d_u,\frac{n-1}{r-1}) \notag\\
&=& n\bigg(\frac{n-1}{r-1}-(t+r-2)(r-1)-\frac{(r-2)(n-r-2)}{2}+\frac{(r-2)(r-3)(n-r)}{2(r-1)}\bigg) \notag\\
&{}& +\bigg(t(r-1)^2+\frac{(r-2)(n-r)}{2}+\frac{n-r}{r-1}\bigg)nd(H).
\end{eqnarray}
\end{small}

On the other hand, since $v\in N_{u}(H)$ if and only if $u\in N_{v}(H)$, we may reverse the sum to get
\begin{eqnarray}
\sum_{u\in V(H)}\sum_{v\in N_{u}(H)}d_v=\sum_{v\in V(H)}(r-1)d_v\cdot d_v\geq (r-1)nd(H)^2,
\end{eqnarray}
with the last inequality follows from the Cauchy-Schwarz inequality. By (3.7) and (3.8), we get
\begin{eqnarray*}
d(H)^2-g_1(n,r,t)d(H)-g_2(n,r,t)\leq0,
\end{eqnarray*}
where
\begin{gather*}
g_1(n,r,t):=t(r-1)+\frac{(r-2)(n-r)}{2(r-1)}+\frac{n-r}{(r-1)^2}, \\
g_2(n,r,t):=\frac{(r-2)(r-3)(n-r)+2(n-1)}{2(r-1)^2}-(t+r-2)-\frac{(r-2)(n-r-2)}{2(r-1)}.
\end{gather*}
This gives that
\begin{eqnarray*}
d(H)\leq \frac{(r(r-3)+4)n}{2(r-1)^2}+O(n^\frac{1}{2})+c(r,t),
\end{eqnarray*}
where $c(r,t)$ is some constant that depends only on $r$ and $t$.
Thus, we have
\begin{eqnarray*}
|E(H)|=\frac{nd(H)}{r}\leq \frac{(r(r-3)+4)n^2}{2r(r-1)^2}+O(n^\frac{3}{2}).
\end{eqnarray*}
\end{proof}

\section{\normalsize Spectral radius of Berge-$K_{3,t}$-free linear $r$-graphs}
\ \ \ \
In this section, we obtain an upper bound and a lower bound for the maximum spectral radius of connected Berge-$K_{3,t}$-free linear $r$-graphs. Meanwhile, we discuss the structure property of spectral extremal connected Berge-$K_{3,t}$-free linear $r$-graphs.

\begin{lem}[\cite{C4}] Let $H$ be a connected simple linear $r$-graph and $\rho(H)$ be the spectral radius of $H$. Let $u$ be the vertex with the maximum entry of the Perron vector of $H$. Then
\begin{eqnarray*}
\rho^2(H)\leq \frac{1}{r-1}\sum_{v\in N_u(H)}d_v.
\end{eqnarray*}
\end{lem}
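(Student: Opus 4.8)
The final statement to prove is Lemma 4.1 (quoted from \cite{C4}), which bounds $\rho^2(H)$ by $\frac{1}{r-1}\sum_{v\in N_u(H)}d_v$ where $u$ realizes the maximum entry of the Perron vector. Since the paper cites this as a known result, my plan is to reconstruct its proof from the eigenvalue equation for the adjacency tensor.

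\medskip

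\textbf{Proof proposal.} Let $\mathbf{x}=(x_1,\dots,x_n)^{\mathrm T}$ be the Perron vector of $H$, so $x_v>0$ for all $v$, $\|\mathbf x\|_r=1$, and $\mathcal A(H)\mathbf x^{r-1}=\rho\,\mathbf x^{[r-1]}$ with $\rho=\rho(H)$. Writing out coordinate $u$ of this equation and using the definition of the adjacency tensor, the plan is to observe
\[
\rho\, x_u^{r-1}=(\mathcal A(H)\mathbf x^{r-1})_u=\sum_{e\in E_u(H)}\ \prod_{v\in e\setminus\{u\}} x_v,
\]
since each edge $e\ni u$ contributes $(r-1)!$ ordered tuples each weighted by $1/(r-1)!$. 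Because $u$ maximizes the Perron entry, $\prod_{v\in e\setminus\{u\}}x_v\le x_u^{r-1}$ for every edge $e\ni u$, which already gives $\rho\le d_u$; but to get the sharper $\rho^2$ bound I will instead iterate the eigenequation once more, i.e. substitute the coordinate equations for the vertices $v\in N_u(H)$ into the right-hand side rather than bounding crudely.

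\medskip

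The key step is a two-step expansion: apply the eigenequation to replace each factor $x_v$ (for $v\in N_u(H)$) using $\rho\, x_v^{r-1}=\sum_{e'\in E_v(H)}\prod_{z\in e'\setminus\{v\}}x_z$. Concretely, raise the identity for $u$ to a suitable power or, more cleanly, use the AM–GM / power-mean inequality to pass from the product $\prod_{v\in e\setminus\{u\}}x_v$ to an average of the $x_v^{r-1}$ over $v\in e\setminus\{u\}$; summing over $e\in E_u(H)$ and interchanging the order of summation yields something of the form $\rho\,x_u^{r-1}\le \frac{1}{r-1}\sum_{v\in N_u(H)} x_v^{r-1}$ up to controlling multiplicities (each neighbour $v$ lies in exactly one edge $l_{uv}$ with $u$, by linearity, so the multiplicity bookkeeping is clean — this is exactly where linearity of $H$ is used). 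Then apply the eigenequation again to each $x_v^{r-1}$ and, once more using $x_z\le x_u$ for all $z$, bound $\sum_{e'\in E_v(H)}\prod_{z\in e'\setminus\{v\}}x_z \le d_v\, x_u^{r-1}$. Chaining the two estimates gives $\rho^2 x_u^{r-1}\le \frac{1}{r-1}\big(\sum_{v\in N_u(H)} d_v\big) x_u^{r-1}$, and dividing by $x_u^{r-1}>0$ finishes it.

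\medskip

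The main obstacle I anticipate is making the passage from the single-edge product $\prod_{v\in e\setminus\{u\}}x_v$ to the per-vertex quantities $x_v^{r-1}$ rigorous with the right constant $\frac{1}{r-1}$: a naive AM–GM gives $\prod_{v\in e\setminus\{u\}}x_v\le \frac{1}{r-1}\sum_{v\in e\setminus\{u\}}x_v^{\,?}$ only with matching exponents, so one must be careful to homogenise degrees correctly (the product has $r-1$ factors each of degree $1$, total degree $r-1$, matching $x_v^{r-1}$), and then verify that after summing over all edges at $u$ each neighbour is counted exactly once — which holds precisely because $H$ is linear, so distinct edges through $u$ meet only at $u$. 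Once this combinatorial/degree bookkeeping is pinned down, the remaining inequalities are immediate from $x_z\le x_u$ and the nonnegativity of all entries; no further machinery beyond the Perron–Frobenius theorem for weakly irreducible nonnegative tensors (already invoked in the introduction) is needed.
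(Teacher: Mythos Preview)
The paper does not supply its own proof of Lemma~4.1; it simply quotes the result from \cite{C4}. So there is nothing to compare your argument against within this paper. That said, your reconstruction is correct and is essentially the standard proof: the eigenequation gives $\rho\,x_u^{r-1}=\sum_{e\ni u}\prod_{v\in e\setminus\{u\}}x_v$; AM--GM applied to the $(r-1)$st powers yields $\prod_{v\in e\setminus\{u\}}x_v\le \frac{1}{r-1}\sum_{v\in e\setminus\{u\}}x_v^{r-1}$; linearity of $H$ ensures each $v\in N_u(H)$ appears exactly once in the resulting double sum, giving $\rho\,x_u^{r-1}\le \frac{1}{r-1}\sum_{v\in N_u(H)}x_v^{r-1}$; multiplying by $\rho$, replacing $\rho\,x_v^{r-1}$ via the eigenequation at $v$, and bounding each product by $x_u^{r-1}$ (since $x_u$ is maximal) completes the chain. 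The ``obstacle'' you flag is not a real obstacle: the inequality $\prod_{i=1}^{r-1}a_i\le \frac{1}{r-1}\sum_{i=1}^{r-1}a_i^{r-1}$ is exactly AM--GM applied to the quantities $a_i^{r-1}$, so the homogeneity and the constant $\frac{1}{r-1}$ come out automatically.
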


\begin{proof}[{\bf{Proof of Theorem 1.2.}}]
Let $H\in \mathcal{H}_{3,t}^{n,r}$. Suppose $u\in V(H)$ is the vertex with the maximum entry of the Perron vector of $H$ and $w$ is the vertex adjacent to $u$.
Since $H$ is a connected linear $r$-graphs with $n$ vertices, we have $\Delta(H)\leq \frac{n-1}{r-1}$ for any $v\in V(H)$. By Lemmas 3.3 and 4.1, we have
\begin{eqnarray*}
\rho^2(H)&\leq& \frac{1}{r-1}\sum_{v\in N_u(H)}d_v \\
&=& \frac{1}{r-1}\sum_{v\in N_{u}(H)\backslash \{w\}}d_v+\frac{1}{r-1}d_w \\
&\leq& f(n,r,t,d_u,d_w) \\
&\leq& f\bigg(n,r,t,\frac{n-1}{r-1},\frac{n-1}{r-1}\bigg),
\end{eqnarray*}
where the last inequality holds because when $n,r$ and $t$ are fixed, the larger $d_{u}$ and $d_{w}$ are, the larger $f(n,r,t,d_u,d_w)$ is.
\end{proof}

Note that the upper bound in Theorem 1.2 is sharp only if there are two vertices $u,w\in V(H)$ such that $d_{u}(H)=d_{w}(H)=\Delta(H)=\frac{n-1}{r-1}$. Next, we will construct a class of connected Berge-$K_{3,t}$-free linear $r$-graphs with two vertices $u,w$ satisfying $d_{u}=d_{w}=\frac{n-1}{r-1}$.

In \cite{D4}, Khormali and Palmer introduced $[r]^d$ in which $[r]^d$ denotes the integer lattice formed by $d$-tuples from $\{1,2,\cdots,r\}$. The integer lattice $[r]^d$ can be thought of as a hypergraph: the collection of $d$-tuples that are fixed in all but one coordinate forms a hyperedge. Thus, $[r]^d$ is a $d$-regular linear $r$-graph with $r^d$ vertices and $d\cdot r^{d-1}$ hyperedges. In the following we refer to the integer lattice $[r]^d$ as a hypergraph $[r]^d$.
Note that the hyperedges of $[r]^d$ can be partitioned into $d$ classes each of which forms a matching of size $r^{d-1}$. This gives a natural proper hyperedge-coloring of $[r]^d$. In Figure 3 we show two examples of $[r]^d$, where each hyperedge of $[4]^3$ is represented by a line segment.

\begin{figure}[h]
  \centering
  \includegraphics[scale=0.9]{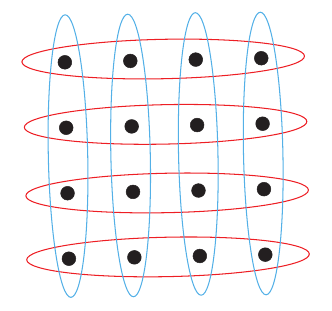}\ \ \
  \includegraphics[scale=1.4]{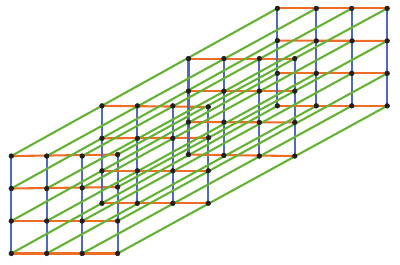}\\
  \caption{\small{Hypergraphs $[4]^2$ and $[4]^3$.}}\label{fig2}
\end{figure}

The union of $k$ hypergraphs $H$ denoted by $kH$. Partition the edges of $(r-1)^{r-2}[r-1]^2$ into $(r-1)^{r-1}$ red edges and $(r-1)^{r-1}$ blue edges. Note that the hypergraph $(r-1)^{r-2}[r-1]^2$ can be obtained from $[r-1]^r$ by deleting $(r-2)(r-1)^{r-1}$ edges of $r-2$ colors.

Let $l=\{u,w,v_1,\cdots,v_{r-2}\}$ be an edge, and $l\vee_{u,w}k(r-1)^{r-2}[r-1]^2$ be an $r$-graph obtained from $l$ and $k(r-1)^{r-2}[r-1]^2$ by inserting $u$ into $k(r-1)^{r-1}$ red edges and inserting $w$ into $k(r-1)^{r-1}$ blue edges (see Figure 4 for an example of $k=1$ and $r=5$).
Let $(r-1)^r\,\big|\,n-r$ and $F:=l\vee_{u,w}\frac{n-r}{(r-1)^2}[r-1]^2$. Then $F$ is a linear $r$-graph with $n$ vertices and $\frac{2(n-r)}{r-1}+1$ edges. Note that $E\big(\frac{n-r}{(r-1)^r}[r-1]^r\big)\big\backslash E\big(\frac{n-r}{(r-1)^2}[r-1]^2\big)$ contains edges of $r-2$ colors, denoted these colors by $c_1,\cdots,c_{r-2}$, where there are $\frac{n-r}{r-1}$ edges for each color.

\begin{figure}[h]
  \centering
  \includegraphics[scale=1.4]{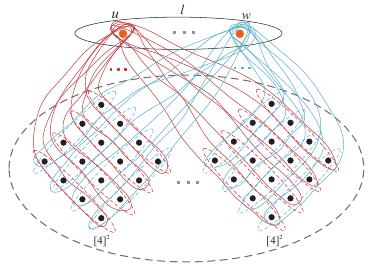}\\
  \caption{\small{The hypergraph $l\vee_{u,w}64[4]^2$.}}\label{fig3}
\end{figure}

Let $\mathcal{G}_{n,r}(F)$ be the set of all linear $r$-graphs with $n$ vertices obtained from $F$ by two steps: for each $1\leq i\leq r-2$, (a) taking any $t-1$ edges from $\frac{n-r}{r-1}$ edges of color $c_i$ of $E\big(\frac{n-r}{(r-1)^r}[r-1]^r\big)\big\backslash E\big(\frac{n-r}{(r-1)^2}[r-1]^2\big)$; (b) inserting $v_i$ to each edge of these $t-1$ edges to form $t-1$ new edges.

For an $r$-graph $H$ and $V'\subseteq V(H)$, let $H-V'$ denote the $r$-graph obtained from $H$ by deleting $V'$ and all their incident edges.
For any $G\in \mathcal{G}_{n,r}(F)$, let $\mathcal{H}_{n,r}(G)$ be the set of all linear $r$-graphs with $n$ vertices obtained from $G$, by embedding a $\frac{n-r}{(r-1)^r}$-partite linear $r$-graph with equal part order $(r-1)^r$ and the maximum degree at most $t-3$ in $G-V(l)$, such that any $H\in \mathcal{H}_{n,r}(G)$ satisfies $d_v(H)\leq t-1$ for any $v\in V(H)\backslash V(l)$.

Let $H\in \mathcal{H}_{n,r}(G)$, we may see that $d_u(H)=d_w(H)=\frac{n-1}{r-1}=\Delta(H)$. Note that any $H\in \mathcal{H}_{n,r}(G)$ satisfies $d_v(H)\leq t-1<t$ for any $v\in V(H)\backslash V(l)$.
By the definition of $\mathcal{H}_{n,r}(G)$, we have $d_{v_i}(H-l)=t-1<t$ for any $1\leq i\leq r-2$ and $H-l$ denote the $r$-graph obtained from $H$ by deleting the edge $l$. Then there are at most two vertices of degree $t$ in a skeleton of ${\rm{Berge}}$-$K_{3,t}$. Thus, $H$ is ${\rm{Berge}}$-$K_{3,t}$-free.

\begin{lem} Suppose that $t>r\geq2$, $(r-1)^r\,\big|\,n-r$ and $H$ is a hypergraph in $\mathcal{H}_{n,r}(G)$. Then $H$ is ${\rm{Berge}}$-$K_{3,t}$-free.
\end{lem}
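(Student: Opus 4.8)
The plan is to argue by contradiction: suppose $H \in \mathcal{H}_{n,r}(G)$ contains a Berge-$K_{3,t}$, with skeleton vertex classes $\{a_1,a_2,a_3\}$ (the side of size $3$) and $\{b_1,\dots,b_t\}$ (the side of size $t$), and a corresponding injection of the $3t$ skeleton edges into distinct hyperedges of $H$. The key structural fact I would exploit is the degree control built into the definition of $\mathcal{H}_{n,r}(G)$: every vertex outside $V(l)$ has degree at most $t-1$ in $H$, and moreover $d_{v_i}(H - l) = t-1 < t$ for each $1 \le i \le r-2$, while the only vertices that can have degree as large as $\frac{n-1}{r-1} \ge t$ are $u$ and $w$. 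In a skeleton of Berge-$K_{3,t}$, each of the $t$ vertices on the large side has degree exactly $3$, which is fine, but each of the $3$ vertices on the small side has degree exactly $t$, so in $H$ it must lie on at least $t$ distinct hyperedges (the images of its $t$ skeleton edges are distinct). Hence each of $a_1, a_2, a_3$ must be a vertex of degree at least $t$ in $H$.

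First I would show that every vertex of $H$ of degree at least $t$ lies in $\{u, w\}$, possibly together with the $v_i$'s but only through the single edge $l$. Precisely: for $v \notin V(l)$ we have $d_v(H) \le t-1 < t$, so such $v$ cannot be a small-side vertex. For $v = v_i$, we have $d_{v_i}(H) = d_{v_i}(H-l) + 1 = t$, but all hyperedges through $v_i$ except $l$ itself are confined to the copy of $[r-1]^r$ attached to $v_i$ (namely the $t-1$ edges of color $c_i$ that received $v_i$), and these $t-1$ edges together with $l$ are the only edges at $v_i$; so if $v_i$ were a small-side skeleton vertex, all $t$ of its skeleton edges would be routed through $\{l\} \cup (\text{the } t-1 \text{ color-}c_i \text{ edges at } v_i)$. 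Since there are only three small-side vertices and at most two of them ($u, w$) have degree exceeding $t$ in a way not constrained to a single ambient copy, I would need to rule out the possibility that a $v_i$ serves as the third. The cleanest route: observe that the three small-side vertices must be pairwise joined to all $t$ common large-side vertices, so in particular $a_1, a_2, a_3$ are pairwise adjacent, or at least each pair $a_p, a_q$ has $t$ common neighbors (via possibly different edges). I would use linearity plus the partite/low-degree structure of the embedded $r$-graph in $G - V(l)$ to show that no $v_i$ can have $t$ common neighbors with another vertex in the required configuration, so the small side must be $\{u, w, ?\}$ with the third vertex of degree $\ge t$ — impossible, since every vertex of $H$ other than $u$ and $w$ has degree at most $t-1$, except the $v_i$, which we've just excluded.

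The main obstacle is handling the vertices $v_i$ carefully: they have degree exactly $t$, so a naive degree count does not immediately exclude them from the small side. The argument there will hinge on the precise way $v_i$ was inserted — into $t-1$ edges of a single color class $c_i$ of a copy of $[r-1]^r$, which is a matching — so the $t-1$ non-$l$ edges at $v_i$ pairwise intersect only in $v_i$, and they live in $G - V(l)$ where the extra embedded $r$-graph has maximum degree at most $t-3$ and is $\frac{n-r}{(r-1)^r}$-partite with parts of size $(r-1)^r$. I would argue that for $v_i$ to be a small-side skeleton vertex, each large-side vertex $b_j$ must be reachable from $v_i$ through one of these $t$ edges, hence each $b_j$ lies in $l$ or in one of the $t-1$ color-$c_i$ edges; then, since $b_j$ must also be adjacent (with $t$ common neighbors, in fact) to the other two small-side vertices, trace where those adjacencies can occur and derive a contradiction with linearity or with the degree bound $t-3$ on the embedded part. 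Once all $v_i$ are excluded, the small side of any Berge-$K_{3,t}$ would have to consist of three vertices each of degree $\ge t$, but $H$ has only two such vertices, namely $u$ and $w$; this contradiction completes the proof.
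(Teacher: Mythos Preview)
Your approach matches the paper's: argue that each of the three degree-$t$ skeleton vertices must have $H$-degree at least $t$, hence must lie in $V(l)$, then rule out the $v_i$'s, leaving only $u$ and $w$. The paper's own proof is just the paragraph immediately preceding the lemma; it records $d_v(H)\le t-1$ for $v\notin V(l)$ and $d_{v_i}(H-l)=t-1$, and then asserts ``there are at most two vertices of degree $t$ in a skeleton of Berge-$K_{3,t}$'' without further detail. So on the $v_i$ case you are already writing more than the paper does.

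That said, your plan for excluding the $v_i$ is more elaborate than needed; the $\tfrac{n-r}{(r-1)^r}$-partite structure and the degree bound $t-3$ on the embedded piece are irrelevant here. A two-line argument suffices. Since $d_{v_i}(H)=t$, if $v_i$ lies on the small side then \emph{every} edge through $v_i$ is used by the bijection $\phi$, in particular $l=\phi(\{v_i,b_j\})$ for some $j$. Hence at most one $v_i$ can sit on the small side (two of them would each force $l$ to be the image of a different skeleton edge). If the small side is $\{v_i,u,w\}$, then $b_j\in l\setminus\{v_i,u,w\}$, so $b_j=v_k$ for some $k\neq i$; but every hyperedge through $v_k$ other than $l$ lies inside $\{v_k\}\cup\big(V(H)\setminus V(l)\big)$, and every hyperedge through $u$ other than $l$ lies inside $\{u\}\cup\big(V(H)\setminus V(l)\big)$, so $l$ is the unique hyperedge containing both $u$ and $v_k$. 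Since $l$ is already taken, $\phi(\{u,v_k\})$ has nowhere to go. This finishes the $v_i$ exclusion without invoking common-neighbor counts or the embedded partite hypergraph.
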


\begin{proof}[{\bf{Proof of Theorem 1.3.}}]
Let $l=\{u,w,v_1\cdots,v_{r-2}\}$ be an edge and the hypergraph $F$ be defined as above. In the following, we calculate $\rho(F)$. Let ${\bf{x}}$ be the Perron vector of $H$. Then by symmetry, we have $x_u=x_w$, $x_{v_1}=x_{v_2}=\cdots=x_{v_{r-2}}$ and $x_{u_1}=x_{u_2}$ for any $u_1,u_2\in V(F)\backslash V(l)$. We may set $x_u=x_w:=x_1$, $x_{v}:=x_2$ for any $v\in \{v_1,\cdots,v_{r-2}\}$, and $x_{v}:=x_3$ for any $v\in V(F)\backslash V(l)$.
By (1.1), we have
\begin{eqnarray*}
\begin{cases}
\rho(F)x_1^{r-1}=x_1x_2^{r-2}+\frac{n-r}{r-1}x_3^{r-1} \\
\rho(F)x_2^{r-1}=x_1^2x_2^{r-3} \\
\rho(F)x_3^{r-1}=2x_1x_3^{r-2}
\end{cases}
\end{eqnarray*}
and
\begin{eqnarray*}
\rho^r(F)-\rho^{\frac{r}{2}}(F)-\frac{(n-r)2^{r-1}}{r-1}=0.
\end{eqnarray*}

Thus, we obtain $\rho(F)=2^{-\frac{2}{r}}\Big(\sqrt{1+\frac{(n-r)2^{r+1}}{r-1}}+1\Big)^{\frac{2}{r}}$. Since $F\subseteq H\in \mathcal{H}_{n,r}(G)$, by Lemma 4.2, $F$ is ${\rm{Berge}}$-$K_{3,t}$-free. Then we have
$${\rm{spex}}_r^{\rm{lin}}(n,{\text{{\rm{Berge}}-}}K_{3,t})\geq 2^{-\frac{2}{r}}\Big(\sqrt{1+\frac{(n-r)2^{r+1}}{r-1}}+1\Big)^{\frac{2}{r}}.$$
\end{proof}

In 2019, Tait \cite{B15} studied the spectral Tur\'{a}n problem for $K_{s,r}$-minor graphs and gave the following theorem.

\begin{thm}[\cite{B15}] Let $2\leq s\leq t$ and $G$ be a $K_{s,t}$-minor free graph with $n$ vertices, where $n$ is sufficiently large. Then
\begin{eqnarray*}
\rho(G)\leq \frac{1}{2}\big(s+t-3+\sqrt{(s+t-3)^2+4(s-1)(n-s+1)-4(s-2)(t-1)}\big),
\end{eqnarray*}
with equality if and only if $t\ |\ n-s+1$ and $G\cong K_{s-1}\nabla \frac{n-s+1}{t}K_t$, where $\nabla$ represents the join of graphs.
\end{thm}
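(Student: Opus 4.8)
The plan is to reduce the inequality to a single quadratic estimate in $\rho:=\rho(G)$ and to feed the $K_{s,t}$-minor-free hypothesis into it through a local, eigenvector-weighted minor-building argument. First I would record that the right-hand side equals the larger root $\rho_0$ of $\lambda^2-(s+t-3)\lambda-(s-1)(n-s+1)+(s-2)(t-1)=0$, and that $\rho_0$ is exactly the spectral radius of $K_{s-1}\nabla\frac{n-s+1}{t}K_t$: this graph has an equitable partition into the $s-1$ dominating vertices and the union of the $K_t$'s, with quotient matrix having rows $(s-2,\,n-s+1)$ and $(s-1,\,t-1)$, whose larger eigenvalue is precisely $\rho_0$. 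Checking that this graph is $K_{s,t}$-minor-free (deleting the $s-1$ apices leaves the $K_t$'s, and no single component can supply a branch set adjacent to $t$ others) gives sharpness, so the entire content is the matching upper bound $\rho\le\rho_0$. Throughout I normalize the Perron vector ${\bf x}$ so that $\max_v x_v=x_z=1$, attained at a vertex $z$.

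The heart of the argument is a structural/minor-building lemma, and this is the step I expect to be the main obstacle. I would show that a $K_{s,t}$-minor-free $G$ admits a set $A$ of at most $s-1$ vertices (morally the apices, i.e.\ the dominating vertices in the extremum) such that every component of $G-A$ is essentially a clique of order at most $t$. The mechanism is to build the forbidden minor: the $s-1$ vertices of $A$ together with one extra branch set serve as the $s$-side, and $t$ vertex-disjoint connected branch sets carved out of the common neighborhood serve as the $t$-side. The delicate point, which makes this genuinely about minors rather than subgraphs, is that the $t$-side may be connected pieces rather than single common neighbors, so one must argue that the inability to carve out such pieces forces the rigid clique structure; a clean special case is that any $s$ vertices with a common neighborhood of size at least $t$ already contain $K_{s,t}$ as a subgraph, which caps how spread out the high-entry vertices can be.

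With this structure in hand I would estimate $\rho^2=\rho^2x_z=\sum_{v\sim z}\sum_{w\sim v}x_w$ by splitting the length-two walks from $z$ according to whether their vertices lie in $A$ or in a component of $G-A$. Walks through the at most $s-1$ apices (each adjacent to up to $n-s+1$ vertices whose entries are close to the apex value) produce the $(s-1)(n-s+1)$ term; walks that stay inside a clique-component produce the $(t-1)$ term; the return walks $z\to v\to z$ together with the apex--clique interaction produce the linear term $(s+t-3)\rho$; and the subtracted $(s-2)(t-1)$ is the second-order correction already isolated by the quotient matrix. Converting entry-sums into multiples of $\rho$ via the eigen-equations $\rho x_v=\sum_{w\sim v}x_w$, these contributions assemble into $\rho^2-(s+t-3)\rho-(s-1)(n-s+1)+(s-2)(t-1)\le o(n)$, hence $\rho\le\rho_0+o(\sqrt n)$.

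Finally, to upgrade this asymptotic bound to the exact inequality and the stated equality characterization (valid for $n$ sufficiently large), I would run a stability step: near-extremality forces $G$ to agree with $K_{s-1}\nabla\frac{n-s+1}{t}K_t$ up to $o(n)$ edges, after which a finishing argument---comparing $G$ with the conjectured extremum by local edge-switchings and perturbations of ${\bf x}$, using that any deviation strictly decreases $\rho$---pins down the unique maximizer and shows it exists only when $t\mid n-s+1$. The two places requiring real work are (i) the minor-building lemma constraining the components of $G-A$, and (ii) removing the $o(\sqrt n)$ slack, since the crude estimate $x_w\le 1$ discards exactly the lower-order terms to which the equality case is sensitive.
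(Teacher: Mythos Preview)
The paper does not prove this statement at all: Theorem~4.3 is quoted verbatim from Tait~\cite{B15} and is used only as motivation for Conjecture~4.4. There is therefore no ``paper's own proof'' to compare your proposal against.

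That said, your sketch is a reasonable outline of the strategy in \cite{B15}: identify the extremal graph via its quotient matrix, use the Perron vector normalized at a maximum-entry vertex to bound $\rho^2$ by counting weighted walks of length two, feed the minor-free hypothesis in through a structural lemma that isolates a small apex set $A$ with $G-A$ falling into bounded-size pieces, and finish with a stability-plus-cleanup argument to remove the $o(\sqrt{n})$ slack. The two places you flag as the real work---the minor-building lemma controlling components of $G-A$, and the exact equality characterization---are indeed where Tait's argument does the heavy lifting, and your sketch does not actually carry them out. In particular, your description of the structural step (``every component of $G-A$ is essentially a clique of order at most $t$'') is too strong as stated and would need to be weakened to a degree/edge bound inside components; and the stability step is only gestured at. So as a proof this is incomplete, but as a plan it points in the right direction. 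Since the present paper offers nothing to compare it to, there is no discrepancy to report.
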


It is easy to see that $F=K_{2}\nabla \overline{K_{n-2}}$ and $\mathcal{G}_{n,2}(F)=\{F\}$ when $r=2$.
By Theorem 4.3, when $s=3$ and $t\,|\,n-2$, $H^*=K_{2}\nabla \frac{n-2}{t}K_t$ if $\rho(H^*)={\rm{spex}}_2(n,K_{2,t})$. Note that $K_{2}\nabla \frac{n-2}{t}K_t$ can be obtained from $K_{2}\nabla \overline{K_{n-2}}$ by embedding $\frac{n-2}{t}K_t$ in $\overline{K_{n-2}}$.
When $t>r\geq3$, by Lemma 4.2, we know that $H$ is ${\rm{Berge}}$-$K_{3,t}$-free for any $H\in \mathcal{H}_{n,r}(G)$. That is, for any $G\in \mathcal{G}_{n,r}(F)$, we may embed a $\frac{n-r}{(r-1)^r}$-partite linear $r$-graph with equal part order $(r-1)^r$ and the maximum degree at most $t-3$ in $G-V(l)$, and the resulting $r$-graph is still ${\rm{Berge}}$-$K_{3,t}$-free.

Therefore, we speculate that the spectral extremal Berge-$K_{3,t}$-free linear $r$-graphs may be obtained from some hypergraph in $\mathcal{G}_{n,r}(F)$, by embedding a $\frac{n-r}{(r-1)^r}$-partite linear $r$-graph with equal part order $(r-1)^r$ and the maximum degree at most $t-3$.
For further information on the structure of the spectral extremal Berge-$K_{3,t}$-free linear $r$-graphs, we propose the following conjecture.

\begin{con}
Suppose $t>r\geq2$ and $(r-1)^r\,|\,n-r$. If $\rho(H^*)={\rm{spex}}_r^{\rm{lin}}(n,{\text{{\rm{Berge}}-}}K_{3,t})$, then $H^*\in \mathcal{H}_{n,r}(\widehat{G})$, where $\widehat{G}$ is the $r$-graph with the maximum spectral radius in $\mathcal{G}_{n,r}(F)$.
\end{con}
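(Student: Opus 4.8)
The plan is to follow the now-standard two-phase spectral-stability strategy for structural spectral Tur\'{a}n results (in the spirit of Tait's Theorem 4.3): first show that any spectral-extremal $H^{*}$ must \emph{almost} be a member of $\mathcal{H}_{n,r}(\widehat{G})$, and then upgrade this approximate description to an exact one. Throughout, let $H^{*}$ be a connected Berge-$K_{3,t}$-free linear $r$-graph with $\rho(H^{*})={\rm{spex}}_r^{\rm{lin}}(n,{\text{{\rm{Berge}}-}}K_{3,t})$, let $\mathbf{x}$ be its Perron vector, and let $u$ be a vertex attaining $x_u=\max_v x_v$.

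\textbf{Phase 1 (two near-universal vertices).} Since $F\subseteq\widehat{G}\subseteq H$ for every $H\in\mathcal{H}_{n,r}(\widehat{G})$, Theorem 1.3 gives the lower bound $\rho(H^{*})\ge\rho(\widehat{G})\ge\rho(F)$, which is of order $n^{1/r}$. One wants to feed this into Lemma 4.1 and the degree estimate of Lemma 3.3 (applied to $u$ and to its heaviest neighbour $w$) and conclude that $d_u$ and $d_w$ equal, or are within a bounded additive constant of, the trivial maximum $\tfrac{n-1}{r-1}$, and that $\mathbf{x}$ is essentially supported on $u$ and $w$. The obstacle is that Lemma 4.1 together with Lemma 3.3 only yields $\rho(H^{*})^{2}\le f\big(n,r,t,\tfrac{n-1}{r-1},\tfrac{n-1}{r-1}\big)$, which is of order $n$, far above $\rho(\widehat{G})$; so the first and by far the hardest step is to establish a much sharper spectral upper bound, ${\rm{spex}}_r^{\rm{lin}}(n,{\text{{\rm{Berge}}-}}K_{3,t})\le\rho(\widehat{G})+o(1)$, that is, to rule out ``spread-out'' near-extremal configurations. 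I expect this will require working directly with the eigenvalue equation $\rho(H^{*})x_u^{r-1}=\sum_{e\ni u}\prod_{z\in e\setminus\{u\}}x_z$ and bounding the \emph{weighted} neighbourhood contribution (rather than the plain sum $\sum_{v\in N_u(H^{*})}d_v$ of Lemma 3.3), where the gain comes from a weighted refinement of the stability Theorem 2.1: when $H^{*}$ is Berge-$K_{3,t}$-free, the common-neighbourhood sets controlled by Theorem 2.1 cannot carry much $\mathbf{x}$-weight unless they hang off a vertex of near-maximum degree.

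\textbf{Phase 2 (exact structure).} Granting Phase 1, so that $H^{*}$ has two adjacent vertices $u,w$ of (near-)maximum degree carrying almost all of $\mathbf{x}$, the remaining work is a long but in principle routine chain of local-exchange and eigenvector-entry estimates. (i) One shows $u,w$ lie in a common edge $l$ of size $r$ and that rerouting or deleting any edge not incident to $\{u,w\}$ cannot increase $\rho$; combined with Lemma 3.2 (each relevant ``link'' vertex lies in at most $t(r-1)+O(1)$ of the pertinent edges) and a stability sharpening of Theorem 2.1, this forces the parts of $H^{*}$ attached to $u$ and to $w$ outside $l$ to be exactly unions of $[r-1]^{2}$-type lattices glued at $u$ and at $w$ --- precisely the way $F$ and the members of $\mathcal{G}_{n,r}(F)$ are built. (ii) One analyses the vertices $v_1,\dots,v_{r-2}\in l$: Berge-$K_{3,t}$-freeness (the argument behind Lemma 4.2) keeps each $d_{v_i}\le t$, while an edge-insertion exchange that strictly raises $\rho$ shows each $v_i$ is inserted into $t-1$ lattice edges of pairwise distinct colour classes, so that $H^{*}$ restricted to $V(l)$ together with the lattice is a member of $\mathcal{G}_{n,r}(F)$; a Rayleigh-quotient comparison over the finitely many colour-class configurations then singles out $\widehat{G}$. (iii) Finally the vertices of $V(H^{*})\setminus V(l)$ carry only $o(1)$ eigenvector weight, so any extra edges among them affect $\rho$ only at lower order; Lemma 4.2 bounds their degrees by $t-1$ and linearity forces a partite structure, whence the $\rho$-maximising choice makes this remainder a $\tfrac{n-r}{(r-1)^{r}}$-partite linear $r$-graph of maximum degree $t-3$ inside $\widehat{G}-V(l)$, that is, $H^{*}\in\mathcal{H}_{n,r}(\widehat{G})$.

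\textbf{Expected main obstacle.} Phase 2 is technical but conceptually standard once Phase 1 is available; the genuine content --- and the reason the statement is only conjectured --- is the tight spectral upper bound of Phase 1. A natural first test case is $r=2$, where $\mathcal{G}_{n,2}(F)=\{F\}$ with $F=K_2\vee\overline{K_{n-2}}$ and the conjecture reduces to a purely graph-theoretic statement about $K_{3,t}$-subgraph-free graphs, for which Tait's Theorem 4.3 is the natural template; settling this base case --- with attention to how $t$ enters and to the difference between being $K_{3,t}$-subgraph-free and $K_{3,t}$-minor-free --- should both clarify the shape of the Phase-1 argument and indicate whether the general conjecture needs any refinement.
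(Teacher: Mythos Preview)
The statement you are attempting to prove is labelled \textbf{Conjecture} in the paper (it is the final numbered item, Conjecture 4.4), and the paper offers \emph{no proof} of it whatsoever: after stating the conjecture the paper moves directly to the declaration of interest and acknowledgements. So there is no ``paper's own proof'' to compare your proposal against; the authors regard this as an open problem.

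Your proposal is accordingly not a proof but a research programme, and you yourself say as much when you write that ``the genuine content --- and the reason the statement is only conjectured --- is the tight spectral upper bound of Phase 1.'' That assessment is accurate: the paper's Theorem 1.2 gives $\rho\le f^{1/2}(n,r,t,\tfrac{n-1}{r-1},\tfrac{n-1}{r-1})$, which is of order $n$, while Theorem 1.3 gives a lower bound of order $n^{1/r}$, so the gap you need to close in Phase 1 is enormous and nothing in the paper (Lemma 3.3, Lemma 4.1, or the stability Theorem 2.1) comes anywhere near doing so. Your sketch of a ``weighted refinement of Theorem 2.1'' is a plausible direction but is pure speculation; no such refinement exists in the paper, and you give no indication of how to obtain one. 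Phase 2 is likewise entirely hypothetical: claims such as ``rerouting or deleting any edge not incident to $\{u,w\}$ cannot increase $\rho$'' and ``this forces the parts of $H^{*}$ attached to $u$ and to $w$ \dots\ to be exactly unions of $[r-1]^{2}$-type lattices'' are assertions, not arguments, and it is far from clear that Berge-$K_{3,t}$-freeness rigidly forces the lattice structure rather than allowing many competing near-extremal configurations.

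One further caution on your suggested $r=2$ test case: Theorem 4.3 (Tait) concerns $K_{s,t}$-\emph{minor}-free graphs, whereas the conjecture for $r=2$ concerns $K_{3,t}$-\emph{subgraph}-free graphs. You note the distinction, but it is not a minor technicality --- the extremal structures and spectral behaviour for the two problems can differ substantially, so Tait's argument is at best a loose template, not a ready-made base case.
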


\noindent{\bf{Declaration of interest}}

The authors declare no known conflicts of interest.

\noindent{\bf{Acknowledgements}}

The authors would like to express their sincere gratitude to the editor and the referees for a very careful reading of the paper and for all their insightful comments and valuable suggestions, which led to a number of improvements in this paper.


\begin{thebibliography}{99}
\small{ 

\bibitem{A3} K. Chang, K. Pearson, T. Zhang, Perron-Frobenius theorem for nonnegative tensors, Commu. Math. Sci. 6 (2008) 507-520.

\bibitem{A4} J. Cooper, A. Dutle, Spectra of uniform hypergraphs, Linear Algebra Appl. 436(9) (2012) 3268-3292.

\bibitem{C111} B. Ergemlidze, A note on the maximum size of Berge-$C_4$-free hypergraphs, Discrete Math. 344 (2021) 112262.

\bibitem{E333} P. Frankl, On the maximum number of edges in a hypergraph with given matching number, Discrete Appl. Math. 216 (2017) p3, 562-581.

\bibitem{A5} S. Friedland, S. Gaubert, L. Han, Perron-Frobenius theorem for nonnegative multilinear forms and extensions, Linear Algebra Appl. 438 (2013) 738-749.

\bibitem{B1} Z. F\"{u}redi, M. Simonovits, The history of degenerate (bipartite) extremal graph problems, In Erdos Centennial, pp. 169-264, Bolyai Soc. Math. Stud., 25, Janos Bolyai Math. Soc., Budapest, 2013.

\bibitem{C5} G. Gao, A. Chang, Y. Hou, Spectral radius on linear $r$-graphs without expanded $K_{r+1}$, SIAM J. Discrete Math. 36(2) (2022) 1000-1011.

\bibitem{B3} D. Gerbner, C. Palmer, Extremal results for Berge hypergraphs, SIAM J. Discrete Math. 31(4) (2017) 2314-2327.

\bibitem{A666} D. Gerbner, B. Patk\'{o}s, Extremal Finite Set Theory, CRC Press, 2018.

\bibitem{B4} D. Gerbner, A. Methuku, C. Palmer, General lemmas for Berge-Tur\'{a}n hypergraph problems, Eur. J. Comb. 86 (2020) 103082.

\bibitem{B5} D. Gerbner, A. Methuku, M. Vizer, Asymptotics for the Tur\'{a}n number of Berge-$K_{2,t}$, J. Comb. Theory, Ser. B 137 (2019) 264-290.

\bibitem{C333} D. Ghosh, E. Gy\H{o}ri, N. Gy\"{o}rgy, A. Paulos, C. Xiao, O. Zamora, Book free 3-uniform hypergraphs, Discrete Math. 347 (2024) 113828.

\bibitem{B2}  E. Gy\H{o}ri, N. Lemons, Hypergraphs with no cycle of a given length, Comb. Probab. Comput. 21 (2012) 193-201.

\bibitem{C4} Y. Hou, A. Chang, J. Cooper, Spectral extremal results for hypergraphs, Electron. J. Combin. 28 (2021) \#P3.46.

\bibitem{B7} L. Kang, Z. Ni, E. Shan, The Tur\'{a}n number of Berge-matching in hypergraphs, Discrete Math. 345 (2022) 112901.

\bibitem{D4} O. Khormali, C. Palmer, Tur\'{a}n numbers for hypergraph star forests, Eur. J. Comb. 102 (2022) 103506.

\bibitem{E222} L. Liu, L. Kang, S. Bai, Bounds on the spectral radius of uniform hypergraphs, Discrete Appl. Math. 259 (2019) 160-169.

\bibitem{A222} W. Mantel, Problem 28, Wiskundige Opgaven, 10 (1907) 60-61.

\bibitem{C444} C. Palmer, M. Tait, C. Timmons, A. Wagner, Tur\'{a}n numbers for Berge-hypergraphs and related extremal problems, Discrete Math. 342 (2019) 1553-1563.

\bibitem{A1} J. Shao, A general product of tensors with applications, Linear Algebra Appl. 439 (2013) 2350-2366.

\bibitem{C222} E. Shan, L. Kang, Y. Xue, The Tur\'{a}n number of Berge hypergraphs with stable properties, Discrete Math. 347 (2024) 113737.

\bibitem{B15} M. Tait, The Colin de Verdi\`{e}re parameter, excluded minors, and the spectral radius, J. Comb. Theory, Ser. A 166 (2019) 42-58.

\bibitem{A333} P. Tur\'{a}n, On an extremal problem in graph theory, Math. Fiz. Lapok, 48 (1941) 436-452.

\bibitem{E1} W. Wang, L. Yu, The upper bound of the spectral radius for the hypergraphs without Berge-graphs, arXiv preprint, arXiv:2312.00368.

\bibitem{E111} P. Xiao, L. Wang, The effect on the (signless Laplacian) spectral radii of uniform hypergraphs by subdividing an edge, Discrete Appl. Math. 283 (2020) 444-455.

\bibitem{A6} Y. Yang, Q. Yang, Further results for Perron-Frobenius theorem for nonnegative tensors, SIAM J. Matrix Anal. Appl. 31(5) (2010) 2517-2530.

\bibitem{A7} Y. Yang, Q. Yang, Further results for Perron-Frobenius theorem for nonnegative tensors II, SIAM J. Matrix Anal. Appl. 32(4) (2011) 1236-1250.

}




\end{thebibliography}
\end{document}